\documentclass[sn-mathphys,Numbered]{sn-jnl}
\usepackage{graphicx}%
\usepackage{multirow}%
\usepackage{amsmath,amssymb,amsfonts}%
\usepackage{amsthm}%
\usepackage{mathrsfs}%
\usepackage[title]{appendix}%
\usepackage{xcolor}%
\usepackage{textcomp}%
\usepackage{manyfoot}%
\usepackage{ifthen}
\usepackage[boxed]{algorithm2e}
\usepackage{todonotes}
\usepackage{algpseudocode}%
\usepackage{listings}%
\usepackage{array}
\newtheorem{theorem}{Theorem}
\newtheorem{proposition}[theorem]{Proposition}
\newtheorem{lemma}[theorem]{Lemma}

\newtheorem{remark}{Remark}%
\newtheorem{definition}{Definition}%
\SetKwComment{Comment}{/* }{ */}
\DeclareMathOperator{\var}{var}
\DeclareMathOperator{\prox}{prox}
\DeclareMathOperator{\dom}{dom}
\DeclareMathOperator*{\argmin}{arg\,min}

\DeclareMathOperator{\Span}{span}

\newcommand{\BV}{\textrm{BV}}

\newcommand{\R}{\mathbb{R}}
\newcommand{\N}{\mathbb{N}}

\raggedbottom

\begin{document}

\title{
  Parameter identification in PDEs by the solution
  of monotone inclusion problems
}
\author*[1]{\fnm{Pankaj} \sur{Gautam}}\email{pgautam908@gmail.com}

\author[2]{\fnm{Markus} \sur{Grasmair}}\email{markus.grasmair@ntnu.no}

\affil[1]{\orgdiv{Department of Applied Mathematics and Scientific Computing}, \orgname{Indian Institute of Technology Roorkee}, \orgaddress{\country{India}}}
\affil[2]{\orgdiv{Department of Mathematical Sciences}, \orgname{Norwegian University of Science and Technology}, \orgaddress{\city{Trondheim}, \country{Norway}}}

\abstract{In this paper we consider the solution of
  monotone inverse problems using the particular
  example of a parameter identification
  problem for a semilinear parabolic PDE.
  For the regularized solution of this problem, we introduce
  a total variation based regularization method requiring
  the solution of a monotone inclusion problem.
  We show well-posedness in the sense of inverse problems
  of the resulting regularization scheme.
  In addition, we introduce and analyze a numerical
  algorithm for the solution of this inclusion problem using a nested inertial
  primal dual method.
  We demonstrate by means of numerical examples the
  convergence of both the numerical algorithm and
  the regularization method.}

\keywords{Parameter identification for PDEs, Lavrentiev regularization, monotone operator equations and inclusions, primal-dual methods, bounded variation regularization, inertial techniques.}

\pacs[MSC Classification]{
  65J20, 
  47H05, 
  47J06, 
  49J20 
}

\maketitle

\section{Introduction}\label{sec1}

Assume that $U$ is a Hilbert space and that $\mathcal{A} \colon U \to U$
is a possibly non-linear mapping. We consider the inverse problem
of solving the equation
\begin{equation}\label{eq:ip}
  \mathcal{A}(u) = y^\delta,
\end{equation}
given noisy data $y^\delta \in U$ satisfying
\[
  y^\delta = y^\dagger + n^\delta
  \qquad\text{ with } \lVert n^\delta\rVert_{U} \le \delta.
\]
Here $y^\dagger = \mathcal{A}(u^\dagger)$ is the noise free
data produced from the true solution $u^\dagger$.
Specifically, we are interested in the setting
of parameter identification problems, where $\mathcal{A}$
is the solution operator of some given PDE, that is, the operator that maps
the parameters $u$ to the solution of the PDE.
In this case, the problem~\eqref{eq:ip} is typically ill-posed
in that its solution, if it exists at all,
does not depend continuously on the data $y^\delta$.
Because of that, it is necessary to apply some regularization
in order to obtain a stable solution.

In the literature, there exist several approaches
to the regularization of this type of problems, amongst others the following:
\begin{itemize}
\item In Tikhonov regularization,
  one computes an approximate solution of~\eqref{eq:ip} by solving
  the minimization problem
  \begin{equation}\label{eq:Tikh}
    \mathcal{T}_\alpha(u) = \frac{1}{2}\lVert \mathcal{A}(u)-y^\delta\rVert_{U}^2 + \alpha\mathcal{R}(u) \to \min.
  \end{equation}
  Here $\mathcal{R} \colon U \to \R\cup\{+\infty\}$ is a regularization
  term that encodes prior information about the true solution $u^\dagger$,
  and the regularization parameter $\alpha > 0$ steers the trade-off
  between regularity of the solution and data fidelity.
  See e.g.~\cite{EngHanNeu96,SchGraGro09} for an overview and
  analysis of this approach.
\item Iterative regularization methods consider the minimization of the
  norm of the residual $\lVert \mathcal{A}(u)-y^\delta \rVert_{L^2}^2$
  or a similar term by means of an iterative method.
  Examples are Landweber iteration (that is, gradient descent)
  or the iteratively regularized Gauss--Newton or Levenberg--Marquardt method.
  Here the regularization is performed by stopping the iteration early,
  well before convergence.
  An overview of iterative methods in a general setting
  can be found in~\cite{KalNeuSch08}.
\item In the specific case of parameter identification problems, there are
  also ``all-at-once'' formulations, which rewrite the problem~\eqref{eq:ip}
  as a system of equations, the first describing the equation, the second
  the data observation, see e.g.~\cite{Kal16,KunSac92}.
\end{itemize}
In the following, we will discuss a different approach that is most closely related
to the Tikhonov approach.
In order to motivate the method, we note that the necessary optimality
condition for a solution of~\eqref{eq:Tikh} reads
\begin{equation}\label{eq:optcond}
  \mathcal{A}'(u)^*\mathcal{A}(u) + \alpha\partial\mathcal{R}(u) \ni \mathcal{A}'(u)^* y^\delta,
\end{equation}
provided that $\mathcal{A}$ is Fr\'echet differentiable and $\mathcal{R}$ is convex
and lower semi-continuous with subdifferential $\partial\mathcal{R}$.
If $\mathcal{A}$ is bounded linear, the necessary optimality condition is also sufficient,
and the minimizer of $\mathcal{T}_\alpha$ is uniquely characterized by~\eqref{eq:optcond}.
In the non-linear case, however, this is in general not the case,
and there may exist non-optimal solutions of~\eqref{eq:optcond}
as well as local minimizers of $\mathcal{T}_\alpha$.
This makes both the theoretical analysis and numerical implementation
of Tikhonov regularization challenging.
Iterative regularization methods based on the minimization of
the residual $\lVert \mathcal{A}(u)-y^\delta\rVert_{L^2}^2$ face the same challenge,
and most convergence and stability results for these methods hold only
for initializations of the iteration sufficiently close to the true solution.
In addition, the operator $\mathcal{A}$ has to satisfy additional
regularity conditions, a typical example being the \emph{tangential cone condition}
introduced in~\cite[Eq.~1.5]{HanNeuSch95}.

An alternative to Tikhonov regularization that is applicable to
cases where $\mathcal{A}$ is a monotone operator
is \emph{Lavrentiev regularization}, see e.g.~\cite{Tau02,AlbRya06,HofKalRes16}.
The classical formulation requires the
solution of the monotone equation $\mathcal{A}(u) + \alpha u = y^\delta$,
which is related to Tikhonov regularization with the regularization
term $\mathcal{R}(u) = \frac{1}{2}\lVert u \rVert^2_U$.
In \cite{GraHil20}, however, a generalization in the form of the
monotone inclusion problem
\begin{equation}\label{eq:moninc}
  \mathcal{A}(u) + \alpha\partial\mathcal{R}(u) \ni y^\delta
\end{equation}
was proposed, which also allows for the inclusion of non-quadratic regularization
terms similar to Tikhonov regularization.
In contrast to~\eqref{eq:optcond}, this inclusion problem has, under certain
coercivity conditions on $\mathcal{A}$ and $\mathcal{R}$, a unique solution,
and it has been shown in~\cite{GraHil20} that this leads to a stable regularization method.
In this paper, we will develop this approach further and show that it
can be applied to the solution of certain parameter identification problems
for monotone parabolic PDEs with a regularization
term that is a combination of a total variation term in time
and a squared Sobolev norm in space.
Moreover, we will discuss a globally convergent solution algorithm
for the numerical solution of inclusion problems of the form~\eqref{eq:moninc}.

\medskip

In the last decades, there has been a growing interest in the study of monotone inclusion problems within the fields of operator theory and computational optimization. This field of study holds significant relevance, offering practical applications in domains such as partial differential equations, and signal and image processing. The pursuit of identifying the roots of the sum of two or more maximally monotone operators within Hilbert spaces remains a dynamically evolving focus of scientific investigation \cite{BauCom11, Pankaj2021}. Notably, among the methods commonly utilized to address these challenges, splitting algorithms (see \cite[Chapter 25]{BauCom11}) have garnered significant attention.    

Driven by diverse application scenarios, the research community has expressed interest in investigating \textit{primal-dual splitting algorithms} to address complex structured monotone inclusion problem that encompass the presence of finitely many operators, including cases where some of these operators are combined with linear continuous operators and parallel-sum type monotone operators, see~\cite{BotCsetnek2015, Vu2013} and the references therein. The distinguishing feature of these algorithms lies in their complete decomposability, wherein each operator is individually assessed within the algorithm, utilizing either forward or backward steps. 

Primal-dual splitting algorithms, incorporating inertial effects have been featured in \cite{LorenzPock2015, BotCsetnek2016, ChambollePock2016}. These algorithms have demonstrated clear advantages over non-inertial versions in practical experiments \cite{LorenzPock2015, BotCsetnek2016}. The inertial terminology can be noticed as discretization of second order differential equations proposed by Polyak \cite{Polyak1964} to minimize a smooth convex function, the so-called heavy ball method. The presence of an inertial term provides the advantage of using the two preceding terms to determine the next iteration in the algorithm, consequently increases the convergence speed of the algorithm. Nesterov \cite{Nesterov1983} modified the heavy ball method to enhance the convergence rate for smooth convex functions by using the inertial point to evaluate the gradient. In \cite{BeckMarc2009}, Beck and Teboulle have proposed a fast iterative shrinkage-thresholding algorithm (FISTA) within the forward-backward splitting framework for the sum of two convex function, one being non-smooth. The FISTA algorithm is versatile and finds application in numerous practical problems, including sparse signal recovery, image processing, and machine learning.
\bigskip

In this paper, we apply non-linear Lavrentiev regularization
by combining a total variation term in time and a squared Sobolev norm in space to solve
a parameter identification problem for a semi-linear PDE.
This yields a completely new, well-posed regularization method that can be extended
to the solution of more general monotone ill-posed problems.
We discuss the properties of the regularizers and the well-posedness
of the regularization method in Section \ref{se:main_result}.
In addition to showing well-posedness, we discuss the numerical solution of the
regularized problem by providing a numerical algorithm using an inertial technique.
There, we follow the ideas of inexact forward-backward splitting to solve the monotone inclusion problems.
Section \ref{se:well_posedness} provides preliminaries on $L^2$-valued functions of bounded variation
and then provided the proof of well-posedness.
We study convergence of the proposed numerical algorithm in Section \ref{se:proof_algone}.
In Section \ref{se:app_to_our_problem}, we discuss how the numerical method
can be applied to the solution of our inclusion problem.
Finally, in Section~\ref{se:experiments} we present some numerical experiments that show
the behavior of our regularization method as well as the solution algorithm.

\section{Main Results}\label{se:main_result}

Denote by $I := [0,1]$ the unit interval, and let $\Omega\subset \R^d$, $d \in \N$, be
a bounded domain with Lipschitz boundary.
Assume moreover that $\varphi \colon \R \to \R$ is a monotonically
increasing, continuous function satisfying $\varphi(0) = 0$
and $\lim_{s \to \pm \infty} \varphi(s) = \pm \infty$.
Denote by $\mathcal{A} \colon L^2(I\times \Omega) \to L^2(I\times \Omega)$
the operator that maps $u$ to the (weak) solution of the PDE
\begin{equation}\label{eq:PDE}
  \begin{aligned}
    y_t + \varphi(y) - \Delta y &= u &&\text{ in } I \times \Omega,\\
    y &= 0 &&\text{ on } I \times \partial\Omega,\\
    y(0,\cdot) &= y_0 &&\text{ in } \Omega.
  \end{aligned}
\end{equation}
Here $y_0 \in L^2(\Omega)$ is some given function.
We consider the inverse problem of solving the equation
\begin{equation*}
  \mathcal{A}(u) = y^\delta,
\end{equation*}
given noisy data $y^\delta \in L^2(I\times\Omega)$ satisfying
\[
  y^\delta = y^\dagger + n^\delta
  \qquad\text{ with } \lVert n^\delta\rVert_{L^2} \le \delta.
\]
Here $y^\dagger = \mathcal{A}(u^\dagger)$ is the noise free
data produced from the true solution $u^\dagger$.
That is, we want to reconstruct the source term $u^\dagger$
in~\eqref{eq:PDE} from noisy measurements of the associated solution.
We stress here that we consider the case where the source $u^\dagger$
is both space- and time-dependent.

In this paper, we make the specific assumption that the
true solution $u^\dagger$ of~\eqref{eq:ip} is a function that is smooth in the
space variable, but piecewise constant in the time variable.
That is, we can write
\begin{equation}\label{eq:udagger_pw}
  u^\dagger(x,t) = u^\dagger_i(x)
  \qquad\text{ if } t \in [t_{i-1},t_i),
\end{equation}
where $0 = t_0 < t_1 < \ldots < t_N = 1$ is some (unknown) discretization
of the unit interval, and $u_i^\dagger \in H^1(\Omega)$
for $i=1,\ldots,N$.
Because the true solution $u^\dagger$ is piecewise constant
in the time variable, it makes sense to apply some form
of total variation regularization, which is known to promote
piecewise constant solutions.
However, in the spatial direction we want the solutions to
be smooth, which calls for regularization with some type of
Sobolev (semi-)norm.
Thus we will define a regularization term that consists
of the total variation only in the time variable,
and an $H^1$-semi-norm only in the spatial variable.

Denote by
\begin{equation}\label{eq:Rdef}
  \mathcal{R}(u) := \var_t(u) :=
  \sup_{\substack{\eta\in C_0^1(I;L^2(\Omega))\\\lVert \eta(t)\rVert_{L^2} \le 1\text{ for all } t \in I}}\int_0^1 \langle\eta(t),u(t,\cdot)\rangle_{L^2(\Omega)}\,dt
\end{equation}
the total variation of $u$ in the time variable, and by
\[
  \mathcal{S}(u) := \frac{1}{2}\int_I \lvert u(t,\cdot)\rvert_{H^1}^2 \,dt
  = \frac{1}{2}\int_I \int_\Omega\lvert \nabla_x u(t,x) \rvert^2 dx\,dt
\]
the spatial $H^1$-semi-norm of $u$, integrated
over the whole time interval $I$.
We consider the solution of~\eqref{eq:ip} by applying
non-linear Lavrentiev regularization, consisting in the solution
of the monotone operator equation
\begin{equation}\label{eq:problem}
  \mathcal{A}(u) + \partial(\lambda\mathcal{R} + \mu\mathcal{S})(u) \ni y^\delta,
\end{equation}
where $\lambda > 0$ and $\mu > 0$ are regularization parameters
that control the temporal and spatial smoothness of the
regularized solutions, respectively.
Moreover, $\partial(\lambda\mathcal{R}+\mu\mathcal{S})$ is the
subdifferential of the convex and lower semi-continuous function
$\lambda\mathcal{R}+\mu\mathcal{S}\colon L^2(I\times\Omega) \to \R\cup\{+\infty\}$.

\subsection{Well-posedness}

Our first main result states that the solution of~\eqref{eq:problem}
is well-posed in the sense of inverse problems.
That is, for all positive regularization parameters
the solution exists, is unique, and depends continuously on the right hand side $y^\delta$.
Moreover, as the noise level decreases to zero, the solution of~\eqref{eq:problem}
converges to the true solution of the noise free problem~\eqref{eq:ip}
provided the regularization parameters are chosen appropriately.
\smallskip

\begin{theorem}\label{th:well_posed}
  Assume that the solution $u^\dagger$ of the noise-free equation $\mathcal{A}(u) = y^\dagger$
  satisfies $\mathcal{R}(u^\dagger) + \mathcal{S}(u^\dagger) < \infty$.
  The solution of~\eqref{eq:problem} defines a well-posed regularization method.
  That is, the following hold:
  \begin{itemize}
  \item The inclusion~\eqref{eq:problem} admits for each
    $\lambda$, $\mu > 0$ and each $y^\delta$ a unique solution.
  \item Assume that $\lambda$, $\mu > 0$ are fixed,
    and assume that $\{y_k\}_{k\in\N} \in L^2(I\times\Omega)$
    converge to some $y \in L^2(I\times\Omega)$.
    Denote moreover by $u_k$ and $u$ the solutions of~\eqref{eq:problem}
    with right hand sides $y_k$ and $y$, respectively.
    If $\lVert y_k-y\rVert_{L^2} \to 0$, then also $\lVert u_k-u\rVert_{L^2} \to 0$.
  \item 
    Assume that $\lambda = \lambda(\delta)$ and $\mu = \mu(\delta)$
    are chosen such that
    \[
      \lambda(\delta),\ \mu(\delta) \to 0,
      \qquad\text{ and }\qquad
      \frac{\delta}{\lambda(\delta)},\ \frac{\delta}{\mu(\delta)}
      \qquad\text{ are bounded as } \delta \to 0.
    \]
    Denote by $u^\delta_{\lambda,\mu} = u^\delta_{\lambda(\delta),\mu(\delta)}$
    the solution of~\eqref{eq:problem} with right hand side $y^\delta$
    satisfying $\lVert y^\delta - y^\dagger\rVert_{L^2} \le \delta$.
    Then $\lVert u^\delta_{\lambda,\mu} - u^\dagger\rVert_{L^2} \to 0$ as $\delta \to 0$.
  \end{itemize}
\end{theorem}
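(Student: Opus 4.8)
The plan is to treat~\eqref{eq:problem} within the theory of maximally monotone operators, exploiting the structure of $\mathcal{A}$ coming from the PDE together with the convexity and compactness properties of the regularizer $\mathcal{G} := \lambda\mathcal{R} + \mu\mathcal{S}$. The first thing I would record is that $\mathcal{A}$ is the inverse of the operator $B \colon y \mapsto y_t + y^3 - \Delta y$ (with the boundary and initial conditions of~\eqref{eq:PDE}). Each summand of $B$ is monotone --- the time derivative because $y(0,\cdot)$ is fixed, the cubic term because $s \mapsto s^3$ is increasing, and $-\Delta$ because of the Dirichlet boundary condition --- and, since Poincar\'e's inequality makes the Laplacian term coercive, $B$ is in fact strongly monotone. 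Consequently $\mathcal{A} = B^{-1}$ is single-valued on all of $L^2(I\times\Omega)$, Lipschitz, inverse-strongly-monotone (cocoercive), and in particular monotone and \emph{injective}. On the regularizer side, $\partial\mathcal{G}$ is maximally monotone as the subdifferential of a proper, convex, lower semicontinuous function, and the crucial analytic input (to be supplied by the $\BV$ preliminaries of Section~\ref{se:well_posedness}) is that sublevel sets of $\mathcal{G}$ that are in addition bounded in $L^2$ are \emph{relatively compact} in $L^2(I\times\Omega)$: this follows by combining a Helly-type compactness for $\BV$-in-time functions (controlled by $\mathcal{R}$) with the compact Rellich embedding $H^1(\Omega)\hookrightarrow L^2(\Omega)$ in space (controlled by $\mathcal{S}$).

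For existence and uniqueness I would argue that $\mathcal{A} + \partial\mathcal{G}$ is maximally monotone --- being the sum of a maximally monotone operator and an everywhere-defined continuous monotone operator --- and then obtain a solution for every $y^\delta$ from Minty--Browder surjectivity, \emph{provided} the sum is coercive. Uniqueness is the easy half: if $u_1,u_2$ both solve~\eqref{eq:problem}, subtracting the inclusions and pairing with $u_1-u_2$ forces both $\langle \mathcal{A}(u_1)-\mathcal{A}(u_2),u_1-u_2\rangle$ and the subgradient pairing to vanish; cocoercivity then gives $\mathcal{A}(u_1)=\mathcal{A}(u_2)$, and injectivity of $\mathcal{A}$ yields $u_1=u_2$. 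The coercivity needed for existence is where the work lies: $\mathcal{G}$ vanishes exactly on the space-time constant functions and controls only the spatially non-constant modes (through $\mathcal{S}$ and Poincar\'e--Wirtinger) and the temporal variation (through $\mathcal{R}$), so the one remaining scalar constant mode must be controlled by $\mathcal{A}$ itself. Here the cubic nonlinearity is essential: for a constant source $c\mathbf{1}$ one expects $\langle\mathcal{A}(c\mathbf{1}),c\mathbf{1}\rangle$ to grow superlinearly in $\lvert c\rvert$, which is what makes $\mathcal{A}+\partial\mathcal{G}$ coercive in every direction.

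For stability, with $\lambda,\mu$ fixed the coercivity is uniform, so the solutions $u_k$ associated with $y_k\to y$ are bounded. Subtracting the inclusions for $u_k$ and $u$ and pairing with $u_k-u$ gives, by monotonicity of both $\mathcal{A}$ and $\partial\mathcal{G}$, that $\langle \mathcal{A}(u_k)-\mathcal{A}(u),u_k-u\rangle \le \lVert y_k-y\rVert\,\lVert u_k-u\rVert \to 0$; cocoercivity then yields $\mathcal{A}(u_k)\to\mathcal{A}(u)$ in $L^2$, while the $\mathcal{S}$-part of the subgradient pairing yields $\nabla_x u_k \to \nabla_x u$ in $L^2$. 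Together with boundedness of the regularizer values this places $\{u_k\}$ in a compact subset of $L^2$; extracting a strongly convergent subsequence $u_{k_j}\to \tilde u$ and using continuity and injectivity of $\mathcal{A}$ (from $\mathcal{A}(\tilde u)=\lim\mathcal{A}(u_{k_j})=\mathcal{A}(u)$) identifies $\tilde u = u$, and a standard subsequence argument promotes this to $u_k\to u$ strongly in $L^2$.

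For convergence as $\delta\to 0$ I would start from the subgradient inequality for $v^\delta := y^\delta-\mathcal{A}(u^\delta_{\lambda,\mu})\in\partial(\lambda\mathcal{R}+\mu\mathcal{S})(u^\delta_{\lambda,\mu})$ tested against $u^\dagger$, which after using $y^\dagger=\mathcal{A}(u^\dagger)$, monotonicity of $\mathcal{A}$, and $\lVert y^\delta-y^\dagger\rVert\le\delta$ gives
\[
  \lambda\mathcal{R}(u^\delta_{\lambda,\mu}) + \mu\mathcal{S}(u^\delta_{\lambda,\mu}) + C\,\lVert\mathcal{A}(u^\delta_{\lambda,\mu})-y^\dagger\rVert^2 \le \lambda\mathcal{R}(u^\dagger)+\mu\mathcal{S}(u^\dagger) + \delta\,\lVert u^\delta_{\lambda,\mu}-u^\dagger\rVert.
\]
Since the operator is coercive superlinearly in every direction while the right-hand side grows at most linearly in $\lVert u^\delta_{\lambda,\mu}\rVert$ (the factor $\delta\to 0$), the family $\{u^\delta_{\lambda,\mu}\}$ stays bounded; dividing the remaining terms by $\lambda$ and $\mu$ and using that $\delta/\lambda(\delta)$, $\delta/\mu(\delta)$ are bounded while $\lambda,\mu\to 0$ then shows that $\mathcal{R}(u^\delta_{\lambda,\mu})$, $\mathcal{S}(u^\delta_{\lambda,\mu})$ remain bounded and that $\mathcal{A}(u^\delta_{\lambda,\mu})\to y^\dagger$ in $L^2$. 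The compactness lemma supplies a subsequence converging strongly to some $\bar u$; continuity gives $\mathcal{A}(\bar u)=y^\dagger=\mathcal{A}(u^\dagger)$, and injectivity of $\mathcal{A}$ forces $\bar u = u^\dagger$, whence $u^\delta_{\lambda,\mu}\to u^\dagger$ in $L^2$. I expect the main obstacle throughout to be this \emph{constant mode}: the regularizer gives no control on the global space-time average, so one must extract quantitative coercivity of $\mathcal{A}$ in this single direction from the cubic nonlinearity, and the same gap is exactly what separates weak from strong $L^2$-convergence in the last two parts, closed by the compact embedding rather than by $\mathcal{G}$ alone.
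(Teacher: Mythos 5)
Your overall architecture is sound and in fact mirrors, in self-contained form, what the paper does by citation: the paper verifies the hypotheses of the abstract Lavrentiev result \cite[Thms.~2.3--2.4]{GraHil20} (Hilbert space setting, strict monotonicity and hemicontinuity of $\mathcal{A}$, convexity and lower semicontinuity of the regularizer, existence of $u^\dagger$ with finite regularizer value, compactness of sublevel sets, and a coercivity condition along rays), while you would re-derive that abstract layer via Minty--Browder surjectivity plus monotonicity, cocoercivity, and compactness. Your uniqueness, stability, and $\delta\to 0$ arguments are essentially correct \emph{modulo} two inputs you assume rather than prove: boundedness of the solution families, and compactness of regularizer sublevel sets (the latter you correctly attribute to a Helly-plus-Rellich result; the paper's Theorem~\ref{th:compactsublevelset} obtains it from \cite[Thm.~3.22]{HeiPetRen19}).

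The genuine gap is the coercivity statement, which is precisely the technical heart of the paper, and your formulation of it is both unproven and incorrectly scoped. You claim that $\lambda\mathcal{R}+\mu\mathcal{S}$ fails to control only ``the one remaining scalar constant mode''. But $\mathcal{R}$ is positively homogeneous of degree one, so along a ray $u^\dagger + r v$ it grows only \emph{linearly} in $r$; hence the regularizer provides no superlinear growth in \emph{any} direction $v$ that is constant in space (an infinite-dimensional family), not just in the constant direction, and the coercivity needed for Minty--Browder surjectivity --- and for every boundedness claim in your stability and convergence parts --- must come from $\mathcal{A}$ in all of these directions. Moreover, what is actually needed is a \emph{uniform} statement: the paper's Proposition~\ref{pr:well_posed_main} proves $\lim_{r\to\infty}\inf_{u\in U_K}\langle\mathcal{A}(ru+(1-r)u^\dagger),u-u^\dagger\rangle=+\infty$ over the whole set $U_K$ of unit directions with bounded regularizer, and its proof is not a one-line consequence of the cubic nonlinearity: it uses compactness of $U_K$ (Theorem~\ref{th:compactsublevelset}) and right-continuous BV representatives (Theorem~\ref{th:pointwise_BV}) to obtain a uniform nondegeneracy of $\langle u(t,\cdot)-u^\dagger(t,\cdot),w\rangle$ on a time interval (Lemmas~\ref{le:bv1d} and~\ref{le:coercLe1}), and then a delicate energy/ODE-comparison argument for the PDE (Lemma~\ref{le:coercLe2}) to convert this into the linear-in-$r$ lower bound $\int_I\lVert\mathcal{A}(ru+(1-r)u^\dagger)(t,\cdot)\rVert_{L^3}^3\,dt\ge C_1r-C_2$, which Lemmas~\ref{le:coercLe3} and~\ref{le:coercLe4} turn into the required coercivity. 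Your proposal replaces all of this with ``one expects superlinear growth'', so existence is unsupported. A secondary issue: in the $\delta\to0$ part you conclude that $\mathcal{R}(u^\delta_{\lambda,\mu})$ and $\mathcal{S}(u^\delta_{\lambda,\mu})$ stay bounded by ``dividing by $\lambda$ and $\mu$'', but the hypotheses allow $\mu/\lambda\to\infty$ (e.g.\ $\lambda=\delta$, $\mu=\sqrt{\delta}$), in which case dividing $\lambda\mathcal{R}(u^\delta_{\lambda,\mu})\le\lambda\mathcal{R}(u^\dagger)+\mu\mathcal{S}(u^\dagger)+\delta M$ by $\lambda$ does not bound $\mathcal{R}(u^\delta_{\lambda,\mu})$; this step needs the more careful bookkeeping carried out in the cited abstract result.
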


The proof of this result can be found in Section~\ref{se:wellposed} below.
It mainly relies on a recent general result concerning non-linear
Lavrentiev regularization \cite{GraHil20}.

For the proof of Theorem~\ref{th:well_posed} as well
as the construction of a discretisation of this problem,
we need to reformulate the total variation defined in~\eqref{eq:Rdef}
in a pointwise manner.
For that, we note first that we can identify the
space $L^2(I\times\Omega)$ with the Bochner space
$L^2(I;L^2(\Omega))$ of $L^2$-valued functions on the unit interval $I$.
Thus we can interpret $\mathcal{R}(u)$ as the
total variation of $u$ seen as a function in $L^2(I;L^2(\Omega))$.
Define now 
\[
  \BV(I;L^2(\Omega)) := \bigl\{u \in L^2(I;L^2(\Omega)) : \mathcal{R}(u) < \infty\bigr\}.
\]
It has been shown in~\cite{HeiPetRen19} that
the total variation admits a pointwise interpretation in this space.

\smallskip

\begin{theorem}\label{th:pointwise_BV}
  Assume that $u \in \BV(I;L^2(\Omega))$.
  Then there exists a right-continuous representative $\tilde{u}$
  of $u$ in the sense that
  \[
    \lim_{s \to t^+} \lVert \tilde{u}(s,\cdot)-\tilde{u}(t,\cdot)\rVert_{L^2(\Omega)} = 0
  \]
  for every $t \in [0,1)$.
  Moreover, we have that
  \[
    \mathcal{R}(u) =
    \sup_{0 < t_0 < t_1 < \cdots < t_N < 1} \sum_{i=1}^N \lVert \tilde{u}(t_i,\cdot) - \tilde{u}(t_{i-1},\cdot)\rVert_{L^2(\Omega)}.
  \]
  In addition, $\tilde{u} \colon I \to L^2(\Omega)$
  is continuous outside an at most countable subset of $I$.
\end{theorem}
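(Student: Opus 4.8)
The plan is to reduce all three assertions to standard facts about Hilbert-space-valued measures, by realizing the distributional time-derivative of $u$ as a finite $L^2(\Omega)$-valued Radon measure and taking $\tilde u$ to be, up to an additive constant, its cumulative distribution function. Concretely, I would first argue that $\mathcal{R}(u) < \infty$ forces the distributional derivative $Du$ of $u$ to be a genuine finite measure. Reading the supremum in~\eqref{eq:Rdef} as the total variation of $Du$, i.e.\ as the dual norm of the linear functional $\varphi \mapsto \int_0^1 \langle \varphi(t), dDu(t)\rangle$ over test fields $\varphi \in C_0(I;L^2(\Omega))$ with $\lVert\varphi(t)\rVert_{L^2}\le 1$, this functional is bounded of norm $\mathcal{R}(u)$. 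Since $L^2(\Omega)$ is a Hilbert space, hence reflexive and possessing the Radon--Nikodym property, the Riesz representation theorem for the dual of $C_0(I;L^2(\Omega))$ supplies a unique regular $L^2(\Omega)$-valued Borel measure $Du$ with $|Du|(I) = \mathcal{R}(u)$. I would then set $\tilde u(t) := c + Du((0,t])$, choosing the constant $c \in L^2(\Omega)$ so that $\tilde u = u$ almost everywhere; this is legitimate because $u$ and $t \mapsto Du((0,t])$ share the same distributional derivative and therefore differ by a constant.

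Second, the pointwise regularity of $\tilde u$ follows from elementary continuity properties of finite measures. Right-continuity is continuity from above: for $s \downarrow t$ we have $\tilde u(s)-\tilde u(t) = Du((t,s])$ and $\lVert Du((t,s])\rVert_{L^2} \le |Du|((t,s]) \to 0$ because $(t,s]\downarrow\emptyset$. The same computation with $s \uparrow t$ produces the left limit $\tilde u(t^-) = c + Du((0,t))$, so $\tilde u$ jumps precisely by the atom $Du(\{t\})$ at $t$. Since the finite measure $|Du|$ carries at most countably many atoms (for each $n$ only finitely many points have mass $\ge 1/n$), $\tilde u$ is continuous outside a countable set.

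Third, for the variation identity I would prove two inequalities. For any partition $0 < t_0 < \cdots < t_N < 1$, the telescoping identity $\tilde u(t_i)-\tilde u(t_{i-1}) = Du((t_{i-1},t_i])$ together with $\lVert Du(\cdot)\rVert_{L^2}\le |Du|(\cdot)$ gives
\[
  \sum_{i=1}^N \lVert \tilde u(t_i)-\tilde u(t_{i-1})\rVert_{L^2}
  \le \sum_{i=1}^N |Du|((t_{i-1},t_i]) \le |Du|(I) = \mathcal{R}(u),
\]
so the pointwise variation is bounded by $\mathcal{R}(u)$. For the converse I would take a near-optimal test field $\varphi$ with $\lVert\varphi(t)\rVert_{L^2}\le 1$ and $\int_0^1\langle\varphi,dDu\rangle > \mathcal{R}(u)-\eps$, and replace it by a step field $\psi$ adapted to a fine partition $\{(t_{i-1},t_i]\}$ with $\psi$ equal to $\varphi$ at sample points; uniform continuity of $\varphi$ and finiteness of $|Du|$ make $\int_0^1\langle\varphi-\psi,dDu\rangle$ negligible, while $\int_0^1\langle\psi,dDu\rangle = \sum_i\langle\psi_i, Du((t_{i-1},t_i])\rangle \le \sum_i\lVert Du((t_{i-1},t_i])\rVert_{L^2}$ is at most the pointwise variation. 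Letting $\eps\to 0$ yields $\mathcal{R}(u)$ less than or equal to the pointwise variation, and combining the two bounds gives equality.

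I expect the first step — representing the finite number $\mathcal{R}(u)$ as the total variation of a bona fide $L^2(\Omega)$-valued measure $Du$ — to be the main obstacle, since this is the only place where the infinite-dimensional target $L^2(\Omega)$ genuinely enters: the passage from the dual (weak) definition of $\mathcal{R}$ to a measure, and hence to a pointwise representative with one-sided limits, rests on the Riesz representation of $C_0(I;L^2(\Omega))^*$ by vector measures of bounded variation, which in turn requires that $L^2(\Omega)$ have the Radon--Nikodym property. Once $Du$ is in hand as a measure, the remaining assertions reduce to elementary continuity and atomic properties of finite measures, so I would lean on~\cite{HeiPetRen19} precisely for this representation step.
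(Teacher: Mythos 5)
Your proposal is sound in outline, but it is genuinely more self-contained than the paper's treatment: the paper proves this theorem purely by citation, invoking Prop.~2.1, Prop.~2.3 and Cor.~2.11 of the reference on Banach- and metric-space-valued BV functions, and develops no argument of its own. What you do differently is to exploit the Hilbert structure of $L^2(\Omega)$: you represent the distributional time derivative $Du$ as a finite $L^2(\Omega)$-valued regular Borel measure via the Riesz--Dinculeanu--Singer representation of $C_0(I;L^2(\Omega))^*$, take $\tilde{u}$ to be its cumulative distribution function, and then read off right-continuity, countability of the jump set, and the coincidence of the weak and pointwise variations from elementary continuity and atomicity properties of finite vector measures. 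This buys a transparent, constructive proof in the case at hand and makes visible exactly where the Hilbert (or at least reflexive) structure enters; the citation route is shorter and covers general Banach or metric targets, for which the measure representation of $Du$ can fail (for non-reflexive $X$ the derivative is a priori only $X^{**}$-valued, and for metric targets there is no linear structure at all), which is presumably why the cited work builds the pointwise theory without it. Two caveats on your write-up: first, the Dinculeanu--Singer representation of $C_0(I;X)^*$ by $X^*$-valued measures of bounded variation holds for arbitrary Banach $X$ and does not itself require the Radon--Nikodym property (RNP is what you would need for a polar decomposition $Du = \sigma\, d\lvert Du\rvert$ or a.e.\ differentiability), so RNP is not the actual hinge of the argument, harmless though the remark is here; second, the paper's formula~\eqref{eq:Rdef} as printed has no derivative on the test field $\varphi$, so your reading of $\mathcal{R}(u)$ as the dual-norm total variation of $Du$ silently corrects what is evidently a typo --- this is the right interpretation, but it deserves to be stated explicitly, since the entire theorem rests on it.
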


\begin{proof}
  See~\cite[Prop.~2.1, Prop.~2.3, Cor.~2.11]{HeiPetRen19}.
\end{proof}

In the following we will always identify a function
$u \in \BV(I;L^2(\Omega))$ with its right continuous representative
according to Theorem~\ref{th:pointwise_BV}.
\smallskip

We now discuss the numerical solution of~\eqref{eq:problem}.
First, we note that the domains of both $\mathcal{R}$
and $\mathcal{S}$ are dense proper subspaces of $L^2(I\times\Omega)$,
and $\dom(\mathcal{R})+\dom(\mathcal{S}) \neq L^2(I\times\Omega)$.
Thus we cannot apply results from subdifferential calculus
as found e.g.~in \cite[Sec.~16.4]{BauCom11},
and it is not clear whether the equality
$\partial(\lambda\mathcal{R}+\mu\mathcal{S}) = \lambda\partial\mathcal{R} + \mu\partial\mathcal{S}$
holds.
Because of that and in view of our assumption
concerning the structure of the true solution $u^\dagger$
(see~\eqref{eq:udagger_pw}),
we propose a semi-discretization of~\eqref{eq:problem}.
We fix a grid $\Gamma := \{0=t_0,t_1,\ldots,t_N=1\} \subset I$
with $t_{i-1} < t_i$ for $i=1,\ldots,N$
and denote by $L^2_\Gamma(I\times \Omega)$ the set of functions
$u \in L^2(I\times \Omega)$ such that there exists 
$u_i \in L^2(\Omega)$, $i=1,\ldots,N$, with
\[
  u(t,x) = u_i(x) \qquad\text{ if } t \in [t_{i-1},t_i),\qquad i = 1,\ldots,N.
\]
That is, the functions in $L^2_\Gamma(I\times\Omega)$ are piecewise
constant in the time variable with possible jumps at the
grid points $t_i$.

Define now the operator $D_\Gamma \colon L^2(I\times \Omega) \to L^2(\Omega)^{N-1}$,
\[
  (D_\Gamma u)_i(x) := \frac{1}{t_{i+1}-t_{i}}\int_{t_{i}}^{t_{i+1}} u(x,t)\,dt
  - \frac{1}{t_{i}-t_{i-1}}\int_{t_{i-1}}^{t_{i}} u(x,t)\,dt
\]
for $1 \le i \le N-1$,
and let $\mathcal{R}_\Gamma \colon L^2(\Omega)^{N-1} \to \mathbb{R}$,
\[
  \mathcal{R}_\Gamma(w) := \sum_{i=1}^{N-1} \lVert w_i\rVert_{L^2}.
\]
Define moreover $\mathcal{S}_\Gamma \colon L^2(I\times \Omega) \to \mathbb{R}\cup\{+\infty\}$,
\[
  \mathcal{S}_\Gamma(u) := \begin{cases}
    \mathcal{S}(u) & \text{ if } u \in L^2_\Gamma(I\times\Omega) \cap \dom(\mathcal{S}),\\
    +\infty & \text{ else.}
  \end{cases}
\]      
Instead of~\eqref{eq:problem}, we then consider the semi-discretization
\begin{equation}\label{eq:problem_sd}
  \mathcal{A}(u) + \partial(\lambda\mathcal{R}_\Gamma \circ D_\Gamma + \mu \mathcal{S})(u) \ni y^\delta.
\end{equation}
Note here that $\mathcal{R}(u) = \mathcal{R}_\Gamma(D_\Gamma u)$ and
$\mathcal{S}(u) = \mathcal{S}_\Gamma(u)$ for all $u \in L^2_\Gamma(I\times\Omega)$
(see Theorem~\ref{th:pointwise_BV}, which connects the
weak definition of the variation used in~\eqref{eq:Rdef} to
a pointwise definition).
\smallskip

\begin{lemma}
  We have that
  \[
    \partial(\lambda\mathcal{R}_\Gamma \circ D_\Gamma + \mu \mathcal{S})
    = \lambda D_\Gamma^* \circ \partial\mathcal{R}_\Gamma \circ D_\Gamma + \mu \partial\mathcal{S}_\gamma.
  \]
\end{lemma}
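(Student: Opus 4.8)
The plan is to derive the identity from two standard tools of convex subdifferential calculus: the chain rule for the composition of a convex function with a bounded linear operator, and the Moreau--Rockafellar sum rule. Both become applicable here for the single reason already flagged in the text as the crux of the matter: after the semi-discretization the temporal regularizer $\mathcal{R}_\Gamma\circ D_\Gamma$ is \emph{finite-valued and continuous on the whole space}, in sharp contrast to the original functional $\mathcal{R}$, whose domain is only a dense proper subspace. It is precisely this continuity that restores the qualification conditions and lets the subdifferential split.

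First I would record the continuity. The operator $D_\Gamma\colon L^2(I\times\Omega)\to L^2(\Omega)^{N-1}$ is linear, and it is bounded because each of its components is a difference of averaging integrals, which are bounded on $L^2$. The functional $\mathcal{R}_\Gamma(w)=\sum_{i=1}^{N-1}\lVert w_i\rVert_{L^2}$ is a finite sum of norms on $L^2(\Omega)^{N-1}$, hence globally Lipschitz continuous. Consequently $\mathcal{R}_\Gamma\circ D_\Gamma$ is finite-valued, convex, and (Lipschitz) continuous on all of $L^2(I\times\Omega)$. Next I would apply the sum rule to $f:=\lambda\,\mathcal{R}_\Gamma\circ D_\Gamma$ and $g:=\mu\,\mathcal{S}$ (both proper, convex, lower semi-continuous; the same argument applies verbatim if the spatial term is taken to be its restriction $\mathcal{S}_\Gamma=\mathcal{S}+\iota_{L^2_\Gamma}$, since that is also proper convex lsc). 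Because $f$ is continuous at \emph{every} point, the qualification condition for the sum rule — existence of a point of continuity of $f$ inside $\dom(g)$ — holds trivially once one notes that $\dom(g)\neq\emptyset$. Hence $\partial(f+g)=\partial f+\partial g$, and pulling the positive scalars out of the subdifferentials gives $\partial(\lambda\mathcal{R}_\Gamma\circ D_\Gamma+\mu\mathcal{S})=\lambda\,\partial(\mathcal{R}_\Gamma\circ D_\Gamma)+\mu\,\partial\mathcal{S}$. Finally, the chain rule applied to $\mathcal{R}_\Gamma\circ D_\Gamma$ is justified by the same continuity: since $\mathcal{R}_\Gamma$ is continuous at every point of $L^2(\Omega)^{N-1}$, in particular at every point of the range of $D_\Gamma$, one obtains $\partial(\mathcal{R}_\Gamma\circ D_\Gamma)=D_\Gamma^*\circ\partial\mathcal{R}_\Gamma\circ D_\Gamma$. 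Combining the two identities yields the claim. For both rules I would cite the standard references, e.g.~\cite[Sec.~16.4]{BauCom11}.

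I do not expect a genuine obstacle. The inclusion $\lambda D_\Gamma^*\circ\partial\mathcal{R}_\Gamma\circ D_\Gamma+\mu\,\partial\mathcal{S}\subseteq\partial(\lambda\mathcal{R}_\Gamma\circ D_\Gamma+\mu\mathcal{S})$ is the easy direction and holds with no hypotheses; the content is the reverse inclusion, which is exactly where the constraint qualifications enter. The whole difficulty present for the pair $(\mathcal{R},\mathcal{S})$ — that neither summand is continuous and that $\dom(\mathcal{R})+\dom(\mathcal{S})$ fails to fill $L^2(I\times\Omega)$ — evaporates because $\mathcal{R}_\Gamma\circ D_\Gamma$ is everywhere finite and continuous. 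The only steps needing a line of verification are the boundedness of $D_\Gamma$ and the Lipschitz continuity of $\mathcal{R}_\Gamma$; everything else is an invocation of textbook convex calculus.
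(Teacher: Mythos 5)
Your proof is correct and takes essentially the same route as the paper: the paper's proof simply observes that $D_\Gamma$ is bounded linear and that $\dom(\mathcal{R}_\Gamma) = L^2(\Omega)^{N-1}$, and then invokes the combined sum-and-composition rule \cite[Thm.~16.47]{BauCom11}, which is precisely your two-step argument (Moreau--Rockafellar sum rule plus chain rule for $\mathcal{R}_\Gamma \circ D_\Gamma$) packaged into a single citation. The key observation is identical in both arguments, namely that $\mathcal{R}_\Gamma$ is finite, hence continuous, on the whole space, which restores the constraint qualification that fails for the original pair $(\mathcal{R},\mathcal{S})$.
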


\begin{proof}
  The operator $D_\Gamma\colon L^2(I\times \Omega) \to L^2(\Omega)^{N-1}$ is bounded linear,
  and $\dom(\mathcal{R}_\Gamma) = L^2(\Omega)^{N-1}$.
  Thus we can apply~\cite[Thm.~16.47]{BauCom11}, which proves the assertion.
\end{proof}

As a consequence, we can rewrite~\eqref{eq:problem_sd} as
\begin{equation}\label{eq:problem_sd_b}
  \mathcal{A}(u) + \lambda D_\Gamma^* \partial\mathcal{R}_\Gamma(D_\Gamma u) + \mu \partial\mathcal{S}_\Gamma(u) \ni y^\delta.
\end{equation}

In the following, we will discuss a general algorithmic approach
for the solution of monotone inclusions of the form~\eqref{eq:problem_sd_b}.
Later in Section~\ref{se:app_to_our_problem},
we will discuss the concrete application to our case.

\subsection{Numerical Algorithm}

We now discuss a general algorithm for solving inclusions of the form
\begin{align}\label{eq:problemmodified}
  \text{find }\hat{u}\in \mathcal{U} \text{ such that } 0\in \mathcal{T}(\hat{u})+\partial( f\circ L)(\hat{u})+\partial g(\hat{u}), 
\end{align}
where $\mathcal{U}$ and $\mathcal{V}$ are Hilbert spaces,
$\mathcal{T}\colon\mathcal{U}\to \mathcal{U}$ is a $C$-cocoercive operator, $L \colon \mathcal{U} \to \mathcal{V}$ is bounded linear,
$f \colon \mathcal{U} \to \bar{\mathbb{R}} $, and 
$g \colon \mathcal{V} \to \bar{\mathbb{R}} $ are proper, convex and lower semicontinuous function and $\mathcal{U}$ is a real Hilbert space.  
\smallskip

\begin{definition}
  Let $C > 0$.
  An operator $\mathcal{T}\colon\mathcal{U}\to \mathcal{U}$ is said to be $C$-cocoercive if 
  \[
    \langle \mathcal{T}x-\mathcal{T}y, x-y\rangle\ge C\|\mathcal{T}x-\mathcal{T}y\|^2 ~\forall x,y\in \mathcal{U}.
  \]
\end{definition}

A conventional methodology for addressing problem \eqref{eq:problemmodified} involves the utilization of the forward-backward (FB) splitting method \cite{LionsMercier1979, AbbasAttouch2015,BauCom11}. This method entails the amalgamation of a forward operator with the proximal of function $f\circ L+g$. Consequently, this approach engenders an iterative sequence $\{u_n\}$ following a prescribed form:
\begin{align}\label{eq:FB}
  u_{n+1}=\prox_{\alpha(f\circ L+g)}(u_n-\alpha \mathcal{T}u_n),
\end{align}
where $\alpha>0$ is an appropriate parameter and the proximal operator is defined as 
\[\prox_{\alpha(f\circ L+g)}(x)=\argmin_{u\in \mathcal{U}}\left\{\alpha(f\circ L+g)(u) +\frac{1}{2}\|u-x\|^2\right\}.
\]
The attainment of convergence of sequence \eqref{eq:FB} towards a solution of problem \eqref{eq:problemmodified} can be achieved by making the assumption of cocoerciveness for $\mathcal{T}$ and selecting $\alpha\in (0,2C)$, where $C$ is the cocoercive parameter of $\mathcal{T}$.

Nevertheless, in numerous practical scenarios including the solution of~\eqref{eq:problem_sd}, obtaining the proximal operator for $f\circ L+g$ is not straightforward. Furthermore, a function in the form of $f\circ L$ may not possess a readily available closed-form expression for its proximal operator, which is often the situation with various sparsity-inducing priors used in image and signal processing applications.

In scenarios where explicit access to both the operators ``$\prox g$" and the matrix $L$ is available, the proximal operator can be approximated during each outer iteration through the utilization of an inner iterative algorithm applied to the dual problem. The primary challenge is to determine the optimal number of inner iterations, which profoundly impacts the computational efficiency and theoretical convergence of FB algorithms.

In the literature, two principal strategies have been examined to address the inexact computation of the FB iterate. The first strategy involves developing inexact FB algorithm variants to meet predefined or adaptive tolerance levels. However, these stringent tolerances may lead to a substantial increase in inner iterations and computational costs. Alternatively, another approach prescribes a fixed number of inner iterations, albeit sacrificing some control over the proximal evaluation accuracy. This approach, exemplified in \cite{Chen2019,Bonet2023}, employs a nested primal-dual algorithm, ``warm-started" in each inner loop with results from the previous one. This approach effectively demonstrates convergence towards a solution, even with predetermined proximal evaluation accuracy, as shown in \cite[Theorem 3.1]{Chen2019} and \cite[Theorem 2]{Bonet2023}.

In this section, we present an enhanced variant of the nested primal-dual algorithm that incorporates an inertial step, akin to the FISTA and other Nesterov-type forward-backward algorithms \cite{BeckMarc2009, AttouchJuan2016}, in the setting of the monotone inclusion problem \eqref{eq:problemmodified}. This adaptation can be characterized as an inexact inertial forward-backward algorithm, where the backward step is approximated through a predetermined number of primal-dual iterations and a ``warm–start" strategy for the initialization of the inner loop.

For the numerical solution, we first rewrite \eqref{eq:problemmodified}
as the primal-dual inclusion        
\begin{equation}\label{eq:pdinclusion}
  \text{find } \hat{v}\in \mathcal{V} \text{ such that } (\exists \hat{u}\in \mathcal{U})
  \begin{cases}
    -L^*\hat{v}\in \mathcal{T} \hat{u}+\partial g (\hat{u})\\
    \hat{v}\in \partial f \circ L (\hat{u}).
  \end{cases}
\end{equation}
under the assumption that solutions exist.	
As a next step, we reformulate~\eqref{eq:pdinclusion} further in terms
of fixed points of prox-operators.

\RestyleAlgo{ruled}
\begin{algorithm}
  \caption{Nested inertial primal-dual algorithm}\label{alg:one}
  \SetKwInput{Initialize}{Initialization}
  \Initialize{
    Choose $u_0 = u_{-1}, v_{-1}^{k_{\max}}$, $0<\alpha < 2C$, $0<\beta < 1/\|L\|^2$, $k_{\max}\in \mathbb{N}$, $\{\gamma_n\}\subseteq \mathbb{R}_{\ge 0} $\\
  }
  \BlankLine
  \For{$n=0,1,2,\ldots$}{
    $\bar{u}_n=u_n+\gamma_n(u_n-u_{n-1})$\;
    \text{set}: $v_n^0=v_{n-1}^{k_{\max}}$\;
    \For{$k=0,1,\ldots,k_{{\max}}-1$}{
      $u^k_n= \prox_{\alpha g}(\bar{u}_n-\alpha ( \mathcal{T}(\bar{u}_n) - L^* v_n^k))$\;
      $v_n^{k+1} =\prox_{\beta \alpha^{-1}f^*}(v_n^k+\beta \alpha^{-1}L u_n^k)$\;
    }
    $u_n^{k_{\max}}= \prox_{\alpha g}(\bar{u}_n-\alpha( \mathcal{T}(\bar{u}_n)- L^* v_n^{k_{\max}}))$\;
    ${\displaystyle u_{n+1}=\sum_{k=1}^{k_{\max}}\frac{u_n^k}{k_{\max}}}$\;
  }
\end{algorithm} 

We first recall the following classical result
from~\cite{Moreau1965}, which relates the subdifferential
to the prox-operator.
\smallskip

\begin{lemma}\label{moreau65}
  Let $f\colon\mathcal{U}\to \mathbb{R}$ be a proper, convex and lower semicontinuous function. For all $\alpha,\beta>0$, the following are equivalent:
  \begin{itemize}
  \item [(i)] $u=\prox_{\alpha f}(u+\alpha w)$;
  \item [(ii)] $w\in \partial f(u)$;
  \item [(iii)] $ f(u)+f^*(w)=\langle w, u\rangle $;
  \item [(iv)] $u\in \partial f^*(w)$;
  \item [(v)] $w=\prox_{\beta f^*}(\beta u+w)$.
  \end{itemize}
\end{lemma}

\begin{lemma}
  The solutions $\hat{u}$ of problem \eqref{eq:problemmodified} are characterized by the
  equations
  \begin{align}\label{eq:lemma}
    \begin{cases}
      \hat{u}=\prox_{\alpha g}(\hat{u}-\alpha (\mathcal{T}(\hat{u})-L^*\hat{v}))\\
      \hat{v}=\prox_{\beta \alpha^{-1}f^*}(\hat{v}+\beta \alpha^{-1}L\hat{u})
    \end{cases}
  \end{align}
  for any $\alpha, \beta>0$.
\end{lemma}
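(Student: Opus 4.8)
The plan is to reduce everything to the two equivalences of Lemma~\ref{moreau65}, after first exhibiting the auxiliary dual variable $\hat v$ via the primal--dual inclusion \eqref{eq:pdinclusion}. First I would establish that $\hat u\in\mathcal U$ solves \eqref{eq:problemmodified} if and only if there is some $\hat v\in\mathcal V$ for which $(\hat u,\hat v)$ satisfies \eqref{eq:pdinclusion}. For the direction from \eqref{eq:problemmodified} to \eqref{eq:pdinclusion} one invokes the subdifferential chain rule $\partial(f\circ L)(\hat u)=L^*\partial f(L\hat u)$: this produces a dual vector $\hat v\in\partial f(L\hat u)$ with $L^*\hat v\in\partial(f\circ L)(\hat u)$, and substituting it into \eqref{eq:problemmodified} and isolating the $\partial g$--term yields exactly the two lines of \eqref{eq:pdinclusion}. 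The reverse direction needs only the elementary inclusion $L^*\partial f(L\hat u)\subseteq\partial(f\circ L)(\hat u)$, which always holds, so no qualification condition is required there.

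Next I would convert each inclusion in \eqref{eq:pdinclusion} into a proximal fixed-point equation by applying Lemma~\ref{moreau65} term by term. For the first line, which places an explicit element in $\mathcal T\hat u+\partial g(\hat u)$, I rearrange so that a single vector of $\partial g(\hat u)$ is isolated and then use the equivalence (ii)$\Leftrightarrow$(i) of Lemma~\ref{moreau65} with the function $g$ and step size $\alpha$; this returns the first equation of \eqref{eq:lemma}. For the second line $\hat v\in\partial f(L\hat u)$, I use the equivalence (ii)$\Leftrightarrow$(v) with the function $f$ evaluated at the point $L\hat u$ and with dual step size $\beta\alpha^{-1}$, which returns the second equation of \eqref{eq:lemma}. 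Because each statement in Lemma~\ref{moreau65} is an equivalence, every step is reversible and holds for arbitrary $\alpha,\beta>0$; this both gives the genuine characterization (and not merely a necessary condition) and accounts for the phrase ``for any $\alpha,\beta>0$''.

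The only step that is not routine is the equivalence between \eqref{eq:problemmodified} and \eqref{eq:pdinclusion}, i.e.\ the legitimacy of introducing $\hat v$ through the identity $\partial(f\circ L)=L^*\circ\partial f\circ L$. This equality can fail without a constraint qualification (for instance $f$ being finite and continuous at some point of the range of $L$), and it is precisely the same kind of subdifferential-calculus obstruction that was flagged earlier for $\partial(\lambda\mathcal R+\mu\mathcal S)$ and that motivated the semi-discretization. I therefore expect the main work to lie in either citing the assumption under which \eqref{eq:pdinclusion} was introduced or recording the relevant qualification hypothesis; once the dual variable is legitimately available, the two applications of Lemma~\ref{moreau65} are immediate.
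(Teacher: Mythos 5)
Your proposal follows essentially the same route as the paper's proof: introduce the dual variable $\hat v$ via the identity $\partial(f\circ L)(\hat u)=L^*\partial f(L\hat u)$, then convert the two resulting inclusions into fixed-point equations with Lemma~\ref{moreau65}. If anything, you are more careful than the paper, whose proof establishes only the forward implication (solution $\Rightarrow$ prox equations) and invokes the chain rule silently; the constraint-qualification caveat you flag, and your observation that the converse direction needs only the elementary inclusion $L^*\partial f(L\hat u)\subseteq\partial(f\circ L)(\hat u)$, address gaps in the paper's own argument rather than introducing any in yours.
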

\begin{proof}
  If $\hat{u}$ is a solution of \eqref{eq:problemmodified}, then
  there exist $\hat{v}\in \partial f (L\hat{u})$ and $\hat{w}\in \partial g (\hat{u})$
  such that $0=\mathcal{T}(\hat{u})+L^*\hat{v}+\hat{w}$,
  which implies that (using Lemma \ref{moreau65})
  \[
    \hat{u}=\prox_{\alpha g}(\hat{u}+\alpha \hat{w})
    \quad\text{ and }\quad
    \hat{v}=\prox_{\beta \alpha^{-1} f^*}(\hat{v}+\beta \alpha^{-1} L\hat{u}),
  \]
  where $\alpha$, $\beta>0$, and $f^*$ is the Fenchel dual of $f$.
\end{proof}
In the subsequent section, we articulate and substantiate the convergence of the primal-dual sequence produced by Algorithm \ref{alg:one} toward a solution of the problem \eqref{eq:problemmodified}, contingent upon the fulfillment of a suitable technical assumption concerning the inertial parameters.
\smallskip

\begin{theorem}\label{th:convergence}
  Let $\mathcal{T}\colon\mathcal{U}\to \mathcal{U}$ be a $C$-cocoercive operator
  and let $L\colon\mathcal{U}\to \mathcal{V}$ be a bounded linear operator.
  Assume that $f\colon\mathcal{U}\to \bar{\mathbb{R}}$ and
  $g\colon\mathcal{V}\to \bar{\mathbb{R}}$ are proper, convex and lower semicontinuous functions and
  that the sequence $\{\gamma_n\}$ is such that
  \begin{align}
    \gamma := \sum_{n=0}^{\infty}\gamma_n\|u_n-u_{{n-1}}\|<\infty.\label{cond:inertial}
  \end{align}
  If problem \eqref{eq:problemmodified} has a solution, then the sequence $\{(u_n, v_n^0)\}$ generated by Algorithm \ref{alg:one} is bounded and converges to the solution of \eqref{eq:problemmodified}
  
\end{theorem}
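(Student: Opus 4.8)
The plan is to read Algorithm~\ref{alg:one} as an inexact inertial forward--backward iteration for the operator $\mathcal{T} + \partial(f\circ L + g)$, in which the backward step (the proximal map of $f\circ L + g$) is replaced by $k_{\max}$ primal--dual inner iterations, and to establish convergence through a quasi-Fej\'er analysis of a joint primal--dual energy. Throughout, fix a solution $\hat{u}$ of~\eqref{eq:problemmodified} together with an associated dual variable $\hat{v}$, so that $(\hat{u},\hat{v})$ satisfies the fixed-point characterization~\eqref{eq:lemma}; such a pair exists by hypothesis and by the preceding lemma. The object I would track is the energy
\[
  \Psi_n := \frac{1}{\alpha}\lVert u_n - \hat{u}\rVert^2 + \frac{\alpha}{\beta k_{\max}}\lVert v_n^0 - \hat{v}\rVert^2,
\]
and the aim is a one-step inequality $\Psi_{n+1} \le \Psi_n + b_n - d_n$ with $\sum_n b_n < \infty$ and $d_n \ge 0$.

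First I would analyze a single inner iteration. Since $\prox_{\alpha g}$ and $\prox_{\beta\alpha^{-1}f^*}$ are firmly nonexpansive and $(\hat{u},\hat{v})$ is a fixed point of the inner updates with base point $\hat u$ (by~\eqref{eq:lemma}), subtracting~\eqref{eq:lemma} from the algorithm's updates and expanding squared norms yields, for each $k$,
\[
  \tfrac{1}{\alpha}\lVert u_n^k - \hat u\rVert^2 + \tfrac{\alpha}{\beta}\lVert v_n^{k+1} - \hat v\rVert^2 \le \tfrac{1}{\alpha}\lVert \bar u_n - \hat u\rVert^2 + \tfrac{\alpha}{\beta}\lVert v_n^{k} - \hat v\rVert^2 - r_n^k + \alpha(\alpha-2C)\lVert \mathcal{T}\bar u_n - \mathcal{T}\hat u\rVert^2,
\]
where $r_n^k \ge 0$ collects the firm-nonexpansiveness remainders, and the primal--dual cross terms of the form $2\langle \bar u_n - \hat u,\, L^*(v_n^k - \hat v)\rangle$ cancel against the corresponding contributions from the dual update up to a quantity controlled by the step-size condition $\beta\lVert L\rVert^2 < 1$. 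The forward term is nonpositive because $\mathcal{T}$ is evaluated at the fixed base point $\bar u_n$ for every inner index $k$, so the $C$-cocoercivity inequality $\langle \mathcal{T}\bar u_n - \mathcal{T}\hat u,\, \bar u_n - \hat u\rangle \ge C\lVert \mathcal{T}\bar u_n - \mathcal{T}\hat u\rVert^2$ together with $\alpha < 2C$ yields the nonnegative descent contribution $\alpha(2C-\alpha)\lVert \mathcal{T}\bar u_n - \mathcal{T}\hat u\rVert^2$, which I fold into $d_n$.

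Next I would sum these estimates over the inner loop. The dual terms telescope, leaving only $\lVert v_n^{k_{\max}} - \hat v\rVert^2$ and $\lVert v_n^0 - \hat v\rVert^2$, and the warm-start $v_{n+1}^0 = v_n^{k_{\max}}$ then chains the dual across outer iterations; this is precisely why the dual part of $\Psi_n$ carries the factor $1/k_{\max}$. On the primal side, convexity of $\lVert \cdot - \hat u\rVert^2$ together with the averaging rule $u_{n+1} = \frac{1}{k_{\max}}\sum_{k=1}^{k_{\max}} u_n^k$ gives, by Jensen's inequality, $\lVert u_{n+1} - \hat u\rVert^2 \le \frac{1}{k_{\max}}\sum_{k}\lVert u_n^k - \hat u\rVert^2$, collapsing the per-step bounds into a single bound on $u_{n+1}$. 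Finally, inserting the inertial definition $\bar u_n = u_n + \gamma_n(u_n - u_{n-1})$ and expanding,
\[
  \lVert \bar u_n - \hat u\rVert^2 = \lVert u_n - \hat u\rVert^2 + 2\gamma_n\langle u_n - \hat u,\, u_n - u_{n-1}\rangle + \gamma_n^2\lVert u_n - u_{n-1}\rVert^2,
\]
and bounding the cross term by $2\gamma_n\lVert u_n - \hat u\rVert\,\lVert u_n - u_{n-1}\rVert$ produces the perturbation $b_n$, whose summability is the content of assumption~\eqref{cond:inertial}.

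The final step is the quasi-Fej\'er conclusion, and this is where I expect the main obstacle. The difficulty is the circular dependence between the inertial cross term, which is only controlled once $\{u_n\}$ is known to be bounded, and boundedness itself, which the energy inequality is meant to deliver. I would break this loop by casting the one-step estimate in the form $\Psi_{n+1} \le (1+c_n)\Psi_n + c_n' - d_n$ with $\sum_n c_n < \infty$ and $\sum_n c_n' < \infty$, where $c_n$ and $c_n'$ arise from $\gamma_n\lVert u_n - u_{n-1}\rVert$ via $2ab \le a^2 + b^2$, and then invoke the Robbins--Siegmund / quasi-Fej\'er lemma to conclude that $\{\Psi_n\}$ converges and $\sum_n d_n < \infty$. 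Convergence of $\Psi_n$ yields boundedness of $\{(u_n, v_n^0)\}$, closing the bootstrap, while summability of $d_n$ forces the inner primal and dual increments and the cocoercive gap $\lVert \mathcal{T}\bar u_n - \mathcal{T}\hat u\rVert$ to vanish along the iteration; passing to the limit in the fixed-point updates then shows that every weak cluster point of $\{(u_n, v_n^0)\}$ satisfies~\eqref{eq:lemma} and hence solves~\eqref{eq:problemmodified}. Since $\Psi_n$ converges for every solution pair $(\hat u, \hat v)$, Opial's lemma upgrades this to convergence of the whole sequence $\{(u_n, v_n^0)\}$ to a single solution, which is the assertion.
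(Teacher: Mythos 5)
Your proposal follows essentially the same route as the paper's proof: the paper tracks the (proportional) Lyapunov quantity $\beta k_{\max}\lVert u_n-\hat{u}\rVert^2+\alpha^2\lVert v_n^0-\hat{v}\rVert^2$, derives the same per-inner-iteration primal--dual estimate with the cocoercivity term $\alpha(\alpha-2C)\lVert \mathcal{T}(\bar{u}_n)-\mathcal{T}(\hat{u})\rVert^2$, telescopes the dual variables via the warm start, uses convexity (Jensen) over the averaging step, inserts the inertial expansion with Cauchy--Schwarz, and closes with a quasi-Fej\'er argument plus identification of cluster points through the prox fixed-point characterization~\eqref{eq:lemma}. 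Your explicit Robbins--Siegmund and Opial packaging is simply a self-contained rendering of the steps the paper imports by reference from Chen--Loris and Bonettini et al.
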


The proof of this result can be found in Section~\ref{se:proof_algone}.
\smallskip

\begin{remark}
  For the inertial parameter $\gamma_n$ we propose to
  follow the strategy in \cite{Bonet2023} and define $\gamma_n$ as
  \begin{align*}\gamma_n=
    \begin{cases}
      0, &n=0\\
      \min\left\{\gamma_n^{FISTA}, \frac{\sigma \rho_n}{\|u_n-u_{n-1}\|} \right\}, &n=1,2,\dots
    \end{cases}
  \end{align*}
  where $\sigma>0$ is a constant, $\{\rho_n\}$ is a fixed summable sequence,
  and $\gamma_n^{FISTA}$ is computed according to the usual FISTA rule \cite{BeckMarc2009}
  \begin{align*}t_0=1, \quad
    \begin{cases}
      t_{n+1}&=\dfrac{1+\sqrt{1+4t_n^2}}{2}\\
      \gamma_n^{FISTA}&=\dfrac{t_{n}-1}{t_{n+1}}
    \end{cases} n=0,1\dots
  \end{align*}
  This guarantees that the condition~\eqref{cond:inertial}
  required for the convergence of the iteration holds
  (cf.~the discussion in \cite[Remark 3]{Bonet2023}
  and \cite[p.~318]{LorenzPock2015}).
\end{remark}

\section{Proof of well-posedness}\label{se:well_posedness}

\subsection{Proof of Theorem~\ref{th:well_posed}}\label{se:wellposed}

For the proof of Theorem~\ref{th:well_posed}, we make use
of \cite[Thms.~2.3--2.4]{GraHil20}, where it is shown that
the assertions of the theorem hold, provided that the following
assumptions are satisfied:
\begin{enumerate}
\item\label{it:well_posed1} The underlying space is a Hilbert space.\footnote{In~\cite{GraHil20},
    the more general setting of a reflexive Banach space is used.}
\item\label{it:well_posed2} The operator $\mathcal{A}$ is strictly monotone and hemicontinuous.
\item\label{it:well_posed3} The regularizer $\mathcal{R}(u) + \mathcal{S}(u)$ is proper,
  convex, and lower semi-continuous.
\item\label{it:well_posed4} There exists a solution $u^\dagger$ of the noise-free problem
  such that $\mathcal{R}(u^\dagger) + \mathcal{S}(u^\dagger) < \infty$.
\item\label{it:well_posed5} For all $C > 0$, the sublevel set
  $\{u \in L^2(I\times\Omega) : \lVert u \rVert_{L^2} + \mathcal{R}(u) + \mathcal{S}(u) \le C\}$
  is compact.
\item\label{it:well_posed6} For all sufficiently large $K$ we have that
  \[
    \lim_{r \to \infty} \inf_{u \in U_K} \bigl\langle \mathcal{A}(ru+(1-r)u^\dagger),u-u^\dagger\bigr\rangle = +\infty,
  \]
  where
  \begin{equation}\label{eq:UKdef}
    U_K := \bigl\{u \in L^2(I\times\Omega) : \mathcal{R}(u) + \mathcal{S}(u) \le K \text{ and } \lVert u - u^\dagger \rVert_{L^2} = 1\bigr\}.
  \end{equation}
\end{enumerate}

Assumption~\ref{it:well_posed1} is obviously satisfied.
Concerning the properties of $\mathcal{A}$,
we note that the PDE~\eqref{eq:PDE} can
be equivalently written as the gradient flow
\[
  y_t + \partial \mathcal{G}(y) \ni u,
  \qquad y(0,\cdot) = y_0,
\]
where $\mathcal{G} \colon L^2(\Omega) \to \R \cup\{+\infty\}$ is
the convex and lower semi-continuous functional
\[
  \mathcal{G}(y) = \begin{cases}
    {\displaystyle \int_\Omega \Phi(y)\,dx + \frac{1}{2}\lVert \nabla y\rVert_{L^2}^2}
    & \text{ if } y \in H^1_0(\Omega),\\
    + \infty & \text{ else,}
  \end{cases}
\]
with $\Phi \colon \R \to \R_{\ge 0}$ defined as
\[
  \Phi(s) = \int_0^s \varphi(\xi)\,d\xi.
\]
Thus we can apply the standard theory concerning gradient flows
on Hilbert spaces and obtain the strict monotonicity and (hemi-)continuity
of $\mathcal{A}$ (see e.g.~\cite[Thms.~4.2, 4.5, 4.11]{Bar76}),
which shows Assumption~\ref{it:well_posed2}.
Assumption~\ref{it:well_posed3} follows immediately from the definitions
of $\mathcal{R}$ and $\mathcal{S}$;
Assumption~\ref{it:well_posed4} is one of the assumptions of the theorem.
Assumption~\ref{it:well_posed5} is a direct consequence
of the following result from~\cite{HeiPetRen19}.
\smallskip

\begin{theorem}\label{th:compactsublevelset}
  For every $C > 0$, the sub-level set
  \[
    \bigl\{ u \in L^2(I \times \Omega) : \lVert u \rVert_{L^2}^2 + \mathcal{R}(u) + \mathcal{S}(u) \le C\bigr\}
  \]
  is compact in $L^2(I\times\Omega)$.
\end{theorem}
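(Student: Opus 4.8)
The plan is to recognise the sublevel set $F := \{u : \lVert u\rVert_{L^2}^2 + \mathcal{R}(u) + \mathcal{S}(u) \le C\}$ as a bounded family in $L^2(I;L^2(\Omega))$ that is regular in space and in time, and then to invoke the Aubin--Lions--Simon compactness theorem together with lower semicontinuity of the three functionals. Since $L^2(I\times\Omega)$ is a complete metric space, it suffices to show that $F$ is relatively compact and closed. First I would record that on $F$ the three bounds combine to give $\lVert u\rVert_{L^2(I;H^1(\Omega))}^2 = \lVert u\rVert_{L^2}^2 + 2\mathcal{S}(u) \le 3C$, so $F$ is bounded in $L^2(I;H^1(\Omega))$, while $\mathcal{R}(u) = \var_t(u) \le C$ controls the temporal variation.

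For relative compactness I would verify Simon's two conditions with $B = L^2(\Omega)$ and exponent $p = 2$. The time-average condition is immediate: for $0 < t_1 < t_2 < 1$ the Bochner integral $\int_{t_1}^{t_2} u(t,\cdot)\,dt$ lies in $H^1(\Omega)$ with
\[
  \Bigl\lVert \int_{t_1}^{t_2} u(t,\cdot)\,dt \Bigr\rVert_{H^1} \le (t_2-t_1)^{1/2}\lVert u\rVert_{L^2(I;H^1)} \le (3C)^{1/2},
\]
so these integrals range over a fixed bounded subset of $H^1(\Omega)$, which is relatively compact in $L^2(\Omega)$ by the Rellich--Kondrachov theorem (here the Lipschitz regularity of $\partial\Omega$ is used). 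The second condition demands that the $L^2$-in-time modulus of continuity $\lVert u(\cdot+h)-u\rVert_{L^2(0,1-h;L^2(\Omega))}$ tend to $0$ as $h \to 0$, uniformly over $F$.

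This uniform $L^2$-modulus is the main obstacle, because the bounded-variation structure only supplies the weaker $L^1$-in-time estimate
\[
  \int_0^{1-h} \lVert u(t+h,\cdot)-u(t,\cdot)\rVert_{L^2(\Omega)}\,dt \le h\,\var_t(u) \le hC,
\]
valid for the right-continuous representative by Theorem~\ref{th:pointwise_BV}. I would upgrade it by first extracting a uniform $L^\infty(I;L^2(\Omega))$ bound: since $\int_I \lVert u(t,\cdot)\rVert_{L^2(\Omega)}^2\,dt \le C$ there is a time $s$ (in the full-measure set where the representative coincides with $u$) with $\lVert u(s,\cdot)\rVert_{L^2(\Omega)} \le C^{1/2}$, and the right-continuous representative then satisfies $\lVert u(t,\cdot)\rVert_{L^2(\Omega)} \le C^{1/2} + \var_t(u) \le C^{1/2} + C =: M$ for every $t \in I$. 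Combining this pointwise bound with the $L^1$ estimate yields
\[
  \int_0^{1-h} \lVert u(t+h,\cdot)-u(t,\cdot)\rVert_{L^2(\Omega)}^2\,dt \le 2M \int_0^{1-h} \lVert u(t+h,\cdot)-u(t,\cdot)\rVert_{L^2(\Omega)}\,dt \le 2MCh,
\]
which vanishes as $h \to 0$ uniformly over $F$. Simon's theorem then gives relative compactness of $F$ in $L^2(I\times\Omega)$.

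Finally I would argue closedness. Each functional is lower semicontinuous for $L^2(I\times\Omega)$-convergence: $\lVert\cdot\rVert_{L^2}^2$ is continuous; $\mathcal{R}$ is a supremum of continuous linear functionals by its definition~\eqref{eq:Rdef}, hence lower semicontinuous; and $\mathcal{S}$ is convex and lower semicontinuous by weak lower semicontinuity of the $H^1$-seminorm (a bounded-energy, $L^2$-convergent sequence converges weakly in $L^2(I;H^1)$ to the same limit). Thus the sum $\lVert\cdot\rVert_{L^2}^2 + \mathcal{R} + \mathcal{S}$ is lower semicontinuous and $F$ is closed. A closed, relatively compact subset of the complete metric space $L^2(I\times\Omega)$ is compact, which completes the proof.
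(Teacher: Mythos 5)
Your proof is correct, but it follows a genuinely different route from the paper. The paper's entire proof is a two-line citation: it invokes the compactness theorem \cite[Thm.~3.22]{HeiPetRen19} for Banach-space-valued BV functions, applied with $Y = H^1(\Omega)$ compactly embedded in $Z = L^2(\Omega)$ --- a result tailored to exactly this situation of ``BV in time, bounded in a compactly embedded space''. You instead verify Simon's Aubin--Lions-type criterion by hand, and the substance of your argument is precisely what such a citation hides: the time-average condition follows from the $L^2(I;H^1)$ bound plus Rellich--Kondrachov, and the uniform time-translation estimate is obtained by upgrading the $L^1$-in-time bound $\int_0^{1-h}\lVert u(t+h,\cdot)-u(t,\cdot)\rVert_{L^2}\,dt \le h\,\var_t(u)$ to an $L^2$-in-time bound via the uniform estimate $\lVert u(t,\cdot)\rVert_{L^2} \le C^{1/2} + \var_t(u)$ for the right-continuous representative; this interpolation step is the one genuinely clever ingredient and it is sound. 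You also handle closedness of the sublevel set explicitly through lower semicontinuity of the three functionals, a point the paper delegates entirely to the cited theorem. What each approach buys: the paper's proof is short and leans on a reference that packages the BV-compactness machinery; yours is self-contained modulo the classical theorem of Simon, makes the mechanism transparent, and as a side benefit avoids the slip in the paper's wording (which states the compact embedding backwards, ``$L^2(\Omega)\subset H^1(\Omega)$'', where your use of $H^1(\Omega)\hookrightarrow L^2(\Omega)$ is the correct direction).
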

\begin{proof}
  The embedding $L^2(\Omega) \subset H^1(\Omega)$ is compact.
  Thus we can apply \cite[Thm.~3.22]{HeiPetRen19}
  with $Y = H^1(\Omega)$ and $Z = X^* = L^2(\Omega)$,
  which proves the assertion.
\end{proof}
\bigskip

Thus it remains to verify Assumption~\ref{it:well_posed6},
which is done in Proposition~\ref{pr:well_posed_main} below.
As a preparation of this result, we require several estimates.

We require first a technical result on classical functions
of bounded variation from~\cite{GraHil20}, which states that
such functions can be uniformly bounded away from $0$ on a small interval
with the size of the interval and the bound only depending
on the total variation and the $L^1$-norm of the function.
\smallskip

\begin{lemma}\label{le:bv1d}
  Let $K > 0$ and $d > 0$ be fixed.
  There exist $c > 0$ and $\delta > 0$ such that for all
  $h \in \BV(I)$ with $\lvert Dh \rvert \le K$ and $\lVert h \rVert_1 \ge d$
  there exists an interval $J \subset I$ with $\lvert J \rvert \ge \delta$
  and either $h(t) \ge c$ for all $t \in J$ or $h(t) \le -c$ for all $t \in J$.
\end{lemma}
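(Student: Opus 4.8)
The plan is to work throughout with the good right-continuous representative $\tilde h$ of $h$ and to exploit two facts: first, that bounded total variation controls the oscillation, namely $\operatorname{ess\,sup} h - \operatorname{ess\,inf} h \le |Dh| \le K$; and second, that a lower bound on $\lVert h\rVert_1$ forces a super-level set of $h$ (or of $-h$) to have positive measure, bounded below in terms of $d$ and $K$. The core idea is that bounded variation limits how many disjoint pieces such a super-level set can split into, so that a set of definite measure must contain one interval of definite length.

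First I would reduce to a one-sided statement. Since $\int_I |h| = \int_I h^+ + \int_I h^- \ge d$, at least one of the two integrals is $\ge d/2$; replacing $h$ by $-h$ if necessary, I may assume $\int_I h^+ \ge d/2$ and look for an interval on which $h \ge c$. Writing $M := \operatorname{ess\,sup} h$, the estimate $\int_I h^+ \le M$ gives $M \ge d/2 > 0$. I set $c := d/8$. If $M \ge K + d/4$, then the oscillation bound yields $\operatorname{ess\,inf} h \ge M - K \ge d/4 = 2c$, so $h \ge c$ on all of $I$ and the claim holds with $J = I$. Thus the remaining case is $M < K + d/4$.

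In this case I would first produce a quantitative lower bound on $\lvert\{h \ge 2c\}\rvert$: splitting the positive part according to whether $h < 2c$ or $h \ge 2c$ gives $d/2 \le \int_I h^+ \le 2c + M\,\lvert\{h \ge 2c\}\rvert$, whence $\lvert\{h\ge 2c\}\rvert \ge (d/4)/M \ge \rho$ with $\rho := (d/4)/(K + d/4) > 0$, depending only on $d$ and $K$. The heart of the argument is then a variation-counting (up-crossing type) estimate. I would call two points of $\{h \ge 2c\}$ equivalent when $h \ge c$ on the interval between them; this is an equivalence relation whose classes have pairwise disjoint convex hulls, on each of which $h \ge c$, and whose hulls together cover $\{h \ge 2c\}$. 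Between two consecutive classes $h$ must drop from $\ge 2c$ to $< c$ and climb back, contributing at least $2c$ to the variation, so the number $n$ of classes satisfies $2(n-1)c \le |Dh| \le K$, i.e. $n \le 1 + K/(2c) = 1 + 4K/d$. Since the disjoint hulls cover a set of measure $\ge \rho$, one of them has length at least $\rho/n$, and on it $h \ge c$. Choosing $\delta := \min\{1,\ \rho/(1 + 4K/d)\}$ and $c := d/8$, both depending only on $K$ and $d$, completes the proof, the initial reduction furnishing (in the opposite sign case) the interval on which $h \le -c$.

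I expect the only genuine difficulty to be the \emph{uniformity} of the constants: a naive argument based on right-continuity produces an interval whose length depends on the individual function $h$. This is exactly what the variation-counting bound on the number of equivalence classes overcomes, turning a qualitative ``positive measure'' statement into a quantitative ``long interval'' statement with constants controlled solely by $K$ and $d$. A compactness/contradiction approach seems less convenient here, since the offending property is not obviously stable under $L^1$-limits and would in any case require the same oscillation bound to control $\lVert h\rVert_\infty$.
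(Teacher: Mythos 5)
Your proof is correct, and it necessarily takes a different route from the paper, because the paper contains no argument for this lemma at all: its ``proof'' is the single line ``See \cite[Lemma~7.3]{GraHil20}'', deferring to an external preprint. Your argument is a self-contained, elementary replacement, and every step checks out: the symmetric reduction to $\int_I h^+\ge d/2$; the dichotomy on $M=\operatorname{ess\,sup}h$, where the oscillation bound $\operatorname{ess\,sup}h-\operatorname{ess\,inf}h\le\lvert Dh\rvert\le K$ either settles the claim outright (case $M\ge K+d/4$) or yields $\lvert\{h\ge 2c\}\rvert\ge\rho:=(d/4)/(K+d/4)$; the equivalence relation on $\{h\ge 2c\}$ whose classes have pairwise disjoint hulls on which $h\ge c$; the up-crossing estimate $2(n-1)c\le\lvert Dh\rvert\le K$ bounding the number $n$ of classes by $1+4K/d$; and the pigeonhole step producing a hull of length at least $\rho/(1+4K/d)$. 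One ordering point that you respect and that genuinely matters: the variation count is what shows there are only \emph{finitely many} classes, and only then is the comparison of $\lvert\{h\ge 2c\}\rvert$ with the sum of hull lengths legitimate. What your route buys over the citation is transparency and explicit constants, $c=d/8$ and $\delta$ of size $d^2/(4K+d)^2$, uniform in $h$ exactly as the statement requires.

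Two details deserve explicit care in a final write-up, though neither is a gap. First, the pointwise conclusion ``$h(t)\ge c$ for all $t\in J$'' only makes sense for a fixed representative; you rightly work with the right-continuous one, which is also how the lemma is consumed in Lemma~\ref{le:coercLe1}, and for the up-crossing count you should invoke the standard fact that the pointwise variation of this representative is bounded by $\lvert Dh\rvert$ (the scalar analogue of Theorem~\ref{th:pointwise_BV}). Second, a class's hull can fail to satisfy $h\ge c$ at its right endpoint (take $h=2c\,\chi_{[0,1/2)}$, whose single class has hull $[0,1/2]$ but $h(1/2)=0$), so $J$ should be taken as the half-open hull $[\inf E,\sup E)$; this has the same length, so the constants are unaffected.
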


\begin{proof}
  See~\cite[Lemma~7.3]{GraHil20}.
\end{proof}

Next we show a similar result for projections of functions
in $\BV(I;L^2(\Omega))$.
\smallskip

\begin{lemma}\label{le:coercLe1}
  Let $K > 0$ be fixed and denote
  \[
    W := \bigl\{w \in C^2(\Omega) : \lVert w \rVert_{L^\infty} + \lVert \Delta w \rVert_{L^\infty} \le 1\bigr\}.
  \]
  There exist $c > 0$ and $\delta > 0$ such that
  for every $u \in U_K$ there exist $w \in W$ and an interval $J \subset I$ with
  $\lvert J \rvert \ge \delta$ such that
  \[
    \langle u(t,\cdot) - u^\dagger(t,\cdot),w\rangle_{L^2} \ge c
  \]
  for every $t \in J$.
\end{lemma}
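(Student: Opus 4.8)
The plan is to reduce the statement, through a chain of localizations, to the scalar bounded-variation result of Lemma~\ref{le:bv1d}, and then to manufacture the single smooth spatial test function $w$ by mollification. Throughout I write $v := u - u^\dagger$, so that $\lVert v\rVert_{L^2(I\times\Omega)} = 1$ for every $u \in U_K$. Since $\mathcal{R}$ is a seminorm, subadditivity gives $\mathcal{R}(v) \le \mathcal{R}(u) + \mathcal{R}(u^\dagger) \le K + \mathcal{R}(u^\dagger) =: K'$, and the bound $\mathcal{S}(u) \le K$ together with the finiteness of $\mathcal{S}(u^\dagger)$ yields $\int_I \lvert v(t,\cdot)\rvert_{H^1}^2\,dt \le K''$; crucially $K'$ and $K''$ depend only on $K$ and the fixed $u^\dagger$, which will render all constants produced below uniform over $U_K$. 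As a first step I would pass to the scalar profile $h(t) := \lVert v(t,\cdot)\rVert_{L^2(\Omega)}$, where $v$ is taken in its right-continuous representative (Theorem~\ref{th:pointwise_BV}). The reverse triangle inequality $\lvert h(t_i) - h(t_{i-1})\rvert \le \lVert \tilde v(t_i) - \tilde v(t_{i-1})\rVert_{L^2}$ together with the pointwise characterization of Theorem~\ref{th:pointwise_BV} shows $h \in \BV(I)$ with $\lvert Dh\rvert \le \mathcal{R}(v) \le K'$. Since $h \ge 0$, $\int_I h^2\,dt = 1$ and $\lvert I\rvert = 1$, one has $\inf h \le 1$, hence $\sup h \le \inf h + K' \le 1+K'$, and therefore $\lVert h\rVert_1 \ge (\int_I h^2)/\sup h \ge 1/(1+K') =: d > 0$. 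Applying Lemma~\ref{le:bv1d} with bound $K'$ and lower bound $d$, and using $h \ge 0$, produces $c_1 > 0$ and $\delta_0 > 0$ depending only on $K$ and an interval $J_0$ with $\lvert J_0\rvert \ge \delta_0$ on which $\lVert v(t,\cdot)\rVert_{L^2} \ge c_1$.

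Next I would freeze the spatial direction on a subinterval, since $t \mapsto v(t,\cdot)$ need not be close to a fixed direction on all of $J_0$. Splitting $J_0$ into $m$ equal subintervals and using additivity of the variation, at most $2K'/c_1$ of them can carry variation exceeding $c_1/2$; choosing $m = \lceil 2K'/c_1\rceil + 1$ yields a subinterval $J_1 \subseteq J_0$ with $\lvert J_1\rvert \ge \delta_0/m =: \delta_1$ and $\var(v;J_1) \le c_1/2$, so that $\lVert v(t,\cdot) - v(s,\cdot)\rVert_{L^2} \le c_1/2$ for all $s,t \in J_1$. Using $\int_{J_1}\lvert v(t,\cdot)\rvert_{H^1}^2\,dt \le K''$ and Chebyshev's inequality I can choose a reference time $t_* \in J_1$ with both $\lVert v(t_*,\cdot)\rVert_{L^2} \ge c_1$ and $\lvert v(t_*,\cdot)\rvert_{H^1} \le \sqrt{2K''/\delta_1} =: \sigma$. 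Setting $w_0 := v(t_*,\cdot)/\lVert v(t_*,\cdot)\rVert_{L^2} \in H^1(\Omega)$ gives $\lVert w_0\rVert_{L^2} = 1$, $\lvert w_0\rvert_{H^1} \le \sigma/c_1$, and, for every $t \in J_1$, $\langle v(t,\cdot), w_0\rangle_{L^2} \ge \lVert v(t_*,\cdot)\rVert_{L^2} - \lVert v(t,\cdot) - v(t_*,\cdot)\rVert_{L^2} \ge c_1 - c_1/2 = c_1/2$.

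It remains to replace $w_0$ by an element of $W$. I would extend $w_0$ to $H^1(\R^d)$ by a bounded extension operator, available since $\Omega$ is Lipschitz, mollify with a standard kernel $\rho_\epsilon$, and restrict to $\Omega$ to obtain $\tilde w \in C^\infty(\overline{\Omega}) \subset C^2(\Omega)$. The uniform $H^1$-bound gives $\lVert w_0 - \tilde w\rVert_{L^2(\Omega)} \le C\epsilon$ with $C$ depending only on $K$, whence, using $\sup h \le 1+K'$, one gets $\langle v(t,\cdot), \tilde w\rangle \ge c_1/2 - (1+K')C\epsilon \ge c_1/4$ on $J_1$ for $\epsilon$ small (depending only on $K$). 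Since $M := \lVert \tilde w\rVert_{L^\infty} + \lVert \Delta \tilde w\rVert_{L^\infty}$ is controlled by $\lVert w_0\rVert_{L^2}(\lVert \rho_\epsilon\rVert_{L^2} + \lVert \Delta\rho_\epsilon\rVert_{L^2}) \lesssim \epsilon^{-d/2-2}$ and is thus finite and uniform, the rescaled function $w := \tilde w/M$ lies in $W$ and satisfies $\langle v(t,\cdot), w\rangle \ge c_1/(4M) =: c$ for all $t \in J := J_1$. Taking $\delta := \delta_1$ then completes the argument.

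The genuinely delicate point is producing a \emph{single} $w \in W$ valid on an entire time interval whose length is bounded below uniformly in $u$: the temporal $\BV$-bound is what allows me to freeze the spatial direction on $J_1$, while the spatial $H^1$-bound is precisely what makes the mollification error uniform over $U_K$; this is also why both $\mathcal{R}$ and $\mathcal{S}$ enter the definition of $U_K$. Ensuring that every constant ($c_1$, $\delta_0$, $\delta_1$, $\sigma$, $\epsilon$, $M$, and finally $c$, $\delta$) depends only on $K$ and the fixed data $u^\dagger$, $\Omega$ is the part demanding the most care.
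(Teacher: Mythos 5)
Your argument is correct, and all constants it produces do depend only on $K$, $u^\dagger$, $\Omega$, and $d$, but it follows a genuinely different route from the paper. The paper's proof is compactness-based: it introduces the functional $F(u) := \sup_{w \in W}\int_I \lvert\langle u(t,\cdot)-u^\dagger(t,\cdot),w\rangle_{L^2}\rvert\,dt$, shows $F > 0$ pointwise on $U_K$ (using density of $\Span W$ in $L^2(\Omega)$ and right-continuity from Theorem~\ref{th:pointwise_BV}), and then obtains a \emph{uniform} lower bound $\tilde{c} = \inf_{U_K} F > 0$ by combining lower semicontinuity of $F$ with the compactness of $U_K$ (Theorem~\ref{th:compactsublevelset}); for each $u$ it then picks a near-maximizing $w \in W$ and applies the scalar result, Lemma~\ref{le:bv1d}, to $h(t) = \langle u(t,\cdot)-u^\dagger(t,\cdot),w\rangle_{L^2}$. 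You instead apply Lemma~\ref{le:bv1d} first, to the norm profile $h(t) = \lVert v(t,\cdot)\rVert_{L^2}$, where the lower bound $\lVert h\rVert_1 \ge 1/(1+K')$ is obtained by an explicit elementary estimate rather than by compactness, and only afterwards construct $w$: freezing the direction of $v$ on a subinterval via the temporal $\BV$ budget, selecting a time slice with controlled $H^1$-seminorm via Chebyshev (this is where the $\mathcal{S}$-bound enters your proof quantitatively, whereas in the paper it enters only through the compactness of $U_K$), and then extending, mollifying and rescaling to land in $W$. What each approach buys: the paper's proof is considerably shorter and reuses machinery needed elsewhere, but its constant $\tilde{c}$ is non-constructive; yours avoids both the compactness theorem and the density argument entirely and yields in principle explicit constants, at the price of invoking Sobolev extension and mollification estimates for Lipschitz domains. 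Two small points of bookkeeping in your write-up: when choosing $t_*$ you should note that the right-continuous representative (which satisfies $\lVert v(t,\cdot)\rVert_{L^2} \ge c_1$ everywhere on $J_1$) agrees with $v$ for a.e.\ $t$, so a $t_*$ enjoying both properties exists; and since $M = \lVert\tilde w\rVert_{L^\infty}+\lVert\Delta\tilde w\rVert_{L^\infty}$ depends on $u$, it is cleanest to divide by its uniform upper bound $\bar M \lesssim \epsilon^{-d/2-2}$ rather than by $M$ itself, so that the final constant $c = c_1/(4\bar M)$ is manifestly independent of $u$ --- both are cosmetic, not gaps.
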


\begin{proof}
  Define $F \colon L^2(I \times \Omega) \to \R_{\ge 0}$,
  \[
    F(u) := \sup_{w \in W} \int_I \lvert \langle u(t,\cdot) - u^\dagger(t,\cdot),w\rangle_{L^2}\rvert\,dt.
  \]
  Then $F$ is convex and lower semi-continuous,
  as it is the supremum of convex and lower semi-continuous functions.
  
  Let now $u \in U_K$, and let $0 < t < 1$ be such that
  $u(t,\cdot) \neq u^\dagger(t,\cdot)$. Here we identify $u$ and $u^\dagger$ with their right continuous
  representatives in $\BV(I;L^2(\Omega))$ (see Theorem~\ref{th:pointwise_BV}).
  Since $\lVert u - u^\dagger\rVert_{L^2} = 1$ and thus $u \neq u^\dagger$, it follows that such a $t$ exists.
  Since $\Span W = C^2(\Omega)$ is dense in $L^2(\Omega)$, there exists some $w \in W$
  such that $\lvert\langle u(t,\cdot)-u^\dagger(t,\cdot),w\rangle_{L^2}\rvert > 0$.
  Because of the right continuity of $u$ and $u^\dagger$ w.r.t.~$t$,
  it follows that also $\int_I \lvert \langle u(t,\cdot)-u^\dagger(t,\cdot),w\rangle_{L^2}\rvert\,dt > 0$.
  As a consequence, $F(u) > 0$ for every $u \in U_K$.
  
  Define now
  \[
    \tilde{c} := \inf_{u \in U_K} F(u).
  \]
  The set $U_K$ is compact in $L^2(I\times\Omega)$ (see Theorem~\ref{th:compactsublevelset}),
  which implies that the minimum in the definition of $\tilde{c}$ is actually attained.
  Since $F(u) > 0$ for all $u$, it follows that $\tilde{c} > 0$.
  As a consequence, there exists for every $u \in U_K$ some $w \in W$
  such that
  \begin{equation}\label{eq:coercLe1_a}
    \int_I \lvert \langle u(t,\cdot) - u^\dagger(t,\cdot),w\rangle_{L^2}\rvert\,dt \ge \frac{\tilde{c}}{2} > 0.
  \end{equation}
  
  Now let $u \in U_K$, and let $w \in W$ be such that~\eqref{eq:coercLe1_a} holds.
  Define the mapping $h \in L^2(I)$,
  \[
    h(t) := \langle u(t,\cdot) - u^\dagger(t,\cdot),w\rangle_{L^2}.
  \]
  Then
  \[
    \lvert Dh \rvert_{\BV} \le \mathcal{R}(u - u^\dagger) \le K + \mathcal{R}(u^\dagger)
    \qquad\text{ and }\qquad
    \int_I \lvert h(t)\rvert\,dt \ge \frac{\tilde{c}}{2}.
  \]
  From Lemma~\ref{le:bv1d}
  it follows that there exist $\delta > 0$ and $c > 0$ only depending
  on $K$, $\mathcal{R}(u^\dagger)$, and $\tilde{c}$ (and thus independent of the choice of $u \in U_K$),
  and an interval $J \subset I$ with $\lvert J \rvert \ge \delta$, such that
  either $h(t) \ge c$ for all $t \in J$ or $h(t) \le -c$ for all $t \in J$.
  After replacing $w$ by $-w$ if necessary, we thus arrive at the claim.
\end{proof}

\begin{lemma}\label{le:coercLe2}
  Let $K > 0$ be fixed.
  There exist constants $C_1 > 0$ and $C_2 \in \R$
  such that for every $u \in U_K$ we have
  \[
    \int_{I\times \Omega} \lvert \mathcal{A}(ru+(1-r)u^\dagger)(t,\cdot)\rvert
    + \lvert \varphi(\mathcal{A}(ru+(1-r)u^\dagger)(t,\cdot))\rvert\,dt\,dx \ge C_1 r - C_2.
  \]
\end{lemma}

\begin{proof}
  Let $u \in U_K$.
  Let $c > 0$, $\delta > 0$, $J \subset I$ and $w \in W$ be as in Lemma~\ref{le:coercLe1}.
  
  For $r > 0$ denote $y_r = \mathcal{A}(ru+(1-r)u^\dagger)$.
  Then
  \begin{multline*}
    \langle ru(t,\cdot)+(1-r)u^\dagger(t,\cdot),w\rangle_{L^2}
    = \langle y_{r,t}(t,\cdot),w\rangle_{L^2}
    + \langle \varphi(y_r)(t,\cdot),w\rangle_{L^2}
    + \langle \nabla y_r(t,\cdot),\nabla w\rangle_{L^2}\\
    =\partial_t \langle y_{r}(t,\cdot),w\rangle_{L^2}
    + \langle \varphi(y_r)(t,\cdot),w\rangle_{L^2}
    - \langle y_r(t,\cdot),\Delta w\rangle_{L^2}
  \end{multline*}
  for almost every $t \in I$.
  Since $\lVert w \rVert_\infty \le 1$ and $\lVert \Delta w \rVert_\infty \le 1$,
  we can further estimate
  \[
    \begin{aligned}
      \langle \varphi(y_r)(t,\cdot),w\rangle_{L^2} & \le \int_\Omega \lvert \varphi(y_r)(t,x)\rvert\,dx,\\
      - \langle y_r(t,\cdot),\Delta w\rangle_{L^2} & \le \int_\Omega \lvert y_r(t,x)\rvert\,dx.
    \end{aligned}
  \]  
  Since by assumption
  \begin{multline*}
    \langle ru(t,\cdot)+(1-r)u^\dagger(t,\cdot),w\rangle_{L^2}
    = r \langle u(t,\cdot)-u^\dagger(t,\cdot),w\rangle_{L^2} + \langle u^\dagger(t,\cdot),w\rangle_{L^2}\\
    \ge cr + \langle u^\dagger(t,\cdot),w\rangle_{L^2}
    \ge cr - \lVert u^\dagger(t,\cdot) \rVert_{L^2},
  \end{multline*}
  we have for almost every $t \in J$ that
  \begin{equation}\label{eq:coercLe2_a}
    cr \le \partial_t \langle y_r(t,\cdot),w\rangle_{L^2} + \int_\Omega \lvert \varphi(y_r)(t,x)\rvert + \lvert y_r(t,x)\rvert\,dt\,dx
    + \lVert u^\dagger(t,\cdot) \rVert_{L^2}.
  \end{equation}
  Now let $t_0 \in J$ be such that the interval $(t_0-\delta/2,t_0+\delta/2)$
  is contained in $J$ (choose e.g.~the mid-point of $J$). Then we obtain
  by integrating~\eqref{eq:coercLe2_a} from $t_0 - s$ to $t_0 + s$ that
  \begin{multline}\label{eq:coercLe2_b}
    2rcs \le \langle y_r(t_0+s,\cdot),w\rangle_{L^2} - \langle y_r(t_0-s,\cdot),w\rangle_{L^2}\\
    + \int_{t_0 - s}^{t_0 + s} \int_\Omega \lvert \varphi(y_r)(t,x)\rvert + \lvert y_r(t,x)\rvert\,dx\,dt + \int_{t_0-s}^{t_0 + s}\lVert u^\dagger(t,\cdot) \rVert_{L^2}\,dt.
  \end{multline}
  for all $0 \le s \le \delta/2$.
  Moreover, as $\lVert w \rVert_\infty \le 1$, we can estimate
  \begin{equation}\label{eq:coercLe2_b1}
    \langle y_r(t_0+s,\cdot),w\rangle_{L^2} - \langle y_r(t_0-s,\cdot),w\rangle_{L^2}
    \le \int_\Omega \lvert y_r(t_0+s,x) \rvert + \lvert y_r(t_0-s,x)\rvert\,dx.
  \end{equation}
  
  Now define for $0 \le s \le \delta/2$
  \[
    G_r(s) := \int_{t_0-s}^{t_0+s} \int_\Omega \lvert \varphi(y_r)(t,x)\rvert + \lvert y_r(t,x)\rvert\,dx\,dt
  \]
  Then~\eqref{eq:coercLe2_b} and \eqref{eq:coercLe2_b1} imply that
  \[
    G_r'(s) \ge 2rcs - f(s) - G_r(s),
  \]
  where
  \[
    f(s) = \int_{t_0-s}^{t_0 + s}\lVert u^\dagger(t,\cdot) \rVert_{L^2}\,dt.
  \]
  Note also that $G_r(0) = 0$.
  
  Now let $H_r$ be the solution of the ODE
  \[
    H_r'(s) = 2rcs - f(s) - H_r(s),
    \qquad\qquad H_r(0) = 0.
  \]
  That is,
  \[
    H_r(s) =
    e^{-s} \int_0^s (2rcp-f(p))e^{p}\,dp
    = 2c(e^{-s} + s - 1)r - e^{-s}\int_0^s f(p)e^p \,dp.
  \]
  Then
  \[
    G_r(s) \ge H_r(s)
    \qquad\text{ for all } 0 \le s \le \delta/2.
  \]
  In particular,
  \begin{multline*}
    \int_{I\times \Omega} \lvert \mathcal{A}(ru+(1-r)u^\dagger)(t,\cdot)\rvert
    + \lvert \varphi(\mathcal{A}(ru+(1-r)u^\dagger)(t,\cdot))\rvert\,dt\,dx \\
    \ge G_r(\delta/2) \ge H_r(\delta/2) = 2c \Bigl(e^{-\delta/2} + \frac{\delta}{2} - 1\Bigr) r - e^{-\delta/2}\int_0^{\delta/2} f(p)e^{p}\,dp,
  \end{multline*}
  which proves the assertion, as $c$, $\delta$, and $f$ were independent of $u$.
  Note here that $e^{-s} \ge 1-s$ for all $s$ in view of the strict
  concavity of the function $s \mapsto e^{-s}$,
  which implies that the factor in the linear term is strictly positive.
\end{proof}

\begin{lemma}\label{le:coercLe3}
  Let $K > 0$ be fixed.
  We have that
  \[
    \lim_{r\to\infty} \frac{1}{r}\inf_{u \in U_K} \bigl\langle \mathcal{A}(ru+(1-r)u^\dagger),ru + (1-r)u^\dagger\bigr\rangle = +\infty.
  \]
\end{lemma}

\begin{proof}
  Let $u \in U_K$ be arbitrary and denote $y_r := \mathcal{A}(ru+(1-r)u^\dagger)$.
  Then
  \begin{multline*}
    \bigl\langle \mathcal{A}(ru+(1-r)u^\dagger),ru + (1-r)u^\dagger\bigr\rangle\\
    = \int_\Omega y_r(1,x)^2\,dx - \int_\Omega y_0(x)^2\,dx
    + \int_{I\times\Omega} \varphi(y_r)(t,x)  y_r(t,x)\,dx\,dt + \int_{I\times\Omega} \lvert \nabla y_r(t,x)\rvert^2\,dx\,dt\\
    \ge  \int_{I\times\Omega} \varphi(y_r)(t,x)  y_r(t,x)\,dx\,dt - \int_\Omega y_0(x)^2\,dx.
  \end{multline*}

  Now define the function $\varphi^- \colon \R_{\ge 0} \to \R_{\ge 0}$,
  \[
    \varphi^-(s) := \min\{\varphi(s),-\varphi(-s)\} = \min\{\lvert\varphi(s)\rvert,\lvert \varphi(-s)\rvert\}.
  \]
  Then $\varphi^-$ is an increasing function satisfying
  $\varphi^-(0) = 0$ and $\lim_{s \to \infty} \varphi^-(s) = +\infty$.
  In particular, the function $\Phi^- \colon \R_{\ge 0} \to \R_{\ge 0}$,
  \[
    \Phi^-(s) := \int_0^s \varphi^-(p)\,dp
  \]
  is convex and superlinearly increasing, and satisfies $\Phi^-(0) = 0$.
  Thus we can estimate
  \[
    s\varphi(s) \ge \lvert s \rvert \varphi^-(\lvert s \rvert) \ge \Phi^-(\lvert s \rvert)
  \]
  for all $s \in \R$.
  As a consequence, Jensen's inequality implies that
  \[
    \int_{I\times \Omega} \varphi(y_r)(t,x) y_r(t,x)\,dx\,dt
    \ge \int_{I\times\Omega} \Phi^-(\lvert y_r(t,x)\rvert)\,dx\,dt
    \ge \Phi^-\Bigl( \int_{I\times\Omega} \lvert y_r(t,x)\rvert\,dx\,dt\Bigr),
  \]
  where we have used the assumption that $\lvert I \times \Omega \rvert = \lvert \Omega \rvert = 1$.

  Next define the function $\varphi^+ \colon \R_{\ge 0} \to \R_{\ge 0}$,
  \[
    \varphi^+(s) := \max\{\varphi(s),-\varphi(-s)\} = \max\{\lvert\varphi(s)\rvert,\lvert \varphi(-s)\rvert\}.
  \]
  Denote moreover by $\psi \colon \R_{\ge 0} \to \R_{\ge 0}$ the lower semi-continuous
  right inverse of $\varphi^+$. That is,
  \[
    \varphi^+(\psi(s)) = s
    \qquad\text{ and }\qquad
    \psi(\varphi^+(s)) \le s
  \]
  for all $s \ge 0$.
  Then, again, $\psi$ is an increasing function satisfying $\psi(0) = 0$
  and $\lim_{s\to\infty} \psi(s) = +\infty$, and thus the function
  $\Psi \colon \R_{\ge 0} \to \R_{\ge 0}$,
  \[
    \Psi(s) := \int_0^s \psi(p) \,dp
  \]
  is also convex and superlinearly increasing, and satisfies $\Psi(0) = 0$. Moreover,
  \[
    s\varphi(s) = \lvert s \rvert \lvert \varphi(s) \rvert
    \ge \psi\bigl(\lvert\varphi^+(s)\rvert\bigr) \lvert \varphi(s) \rvert
    \ge \psi\bigl(\lvert\varphi(s)\rvert\bigr) \lvert \varphi(s) \rvert
  \]
  for all $s \in \R$. 
  Thus Jensen's inequality implies that
  \begin{multline*}
    \int_{I\times \Omega} \varphi(y_r)(t,x) y_r(t,x)\,dx\,dt
    \ge \int_{I\times\Omega} \Psi(\lvert \varphi(y_r)(t,x)\rvert)\,dx\,dt\\
    \ge \Psi\Bigl( \int_{I\times\Omega} \lvert \varphi(y_r)(t,x)\rvert\,dx\,dt\Bigr).
  \end{multline*}

  Next we denote by $P \colon \R_{\ge 0} \to \R_{\ge 0}$,
  \[
    P(s) := (\Phi^- \square \Psi)(s) := \inf_{0 \le p \le s} \Phi^-(p) + \Psi(s-p)
  \]
  the inf-convolution of $\Phi^-$ and $\Psi$.
  Then $P$ is a convex and non-negative function with $P(0) = 0$.
  Moreover, since $\Phi^-(0) = 0$ and $\Psi(0) = 0$, it follows that
  $P(s) \le \Psi(s)$ and $P(s) \le \Phi^-(s)$ for all $s \ge 0$.
  Also, the superlinear growth of $\Phi^-$ and $\Psi$ implies
  that $P(s)$ grows superlinearly as $s \to \infty$ as well.

  Thus, using Lemma~\ref{le:coercLe2} we obtain that
  \begin{multline*}
    \int_{I\times\Omega} \varphi(y_r)(t,x) y_r(t,x)\,dx\,dt\\
    \begin{aligned}
      & \ge \frac{1}{2}\Phi^-\Bigl( \int_{I\times\Omega} \lvert y_r(t,x)\rvert\,dx\,dt\Bigr)
      + \frac{1}{2}\Psi\Bigl( \int_{I\times\Omega} \lvert \varphi(y_r)(t,x)\rvert\,dx\,dt\Bigr)\\
      &\ge \frac{1}{2}P\Bigl( \int_{I\times\Omega} \lvert y_r(t,x)\rvert\,dx\,dt\Bigr)
      +  \frac{1}{2}P\Bigl( \int_{I\times\Omega} \lvert \varphi(y_r)(t,x)\rvert\,dx\,dt\Bigr)\\
      &\ge P\Bigl( \frac{1}{2} \int_{I\times\Omega} \lvert y_r(t,x)\rvert + \lvert\varphi(y_r)(t,x)\rvert\,dx\,dt\Bigr)\\
      &\ge P\Bigl(\frac{1}{2}(C_1 r - C_2)\Bigr)
    \end{aligned}
  \end{multline*}
  provided $r$ is sufficiently large so that $C_1 r - C_2 \ge 0$.
  As a consequence,
  \[
    \bigl\langle \mathcal{A}(ru+(1-r)u^\dagger),ru + (1-r)u^\dagger\bigr\rangle
    \ge P\Bigl(\frac{1}{2}(C_1 r - C_2)\Bigr) - \int_\Omega y_0(x)^2\,dx
  \]
  for sufficiently large $r$.  
  Since $C_1$ and $C_2$ only depend on $K$ and $u^\dagger$
  (but not on $u \in U_K$), the assertion follows.
\end{proof}

\begin{lemma}\label{le:coercLe4}
  There exists a constant $C > 0$ such that
  \[
    \lVert \mathcal{A}(u)\rVert_{L^2(I\times\Omega)}^2
    \le C\bigl(\lVert f \rVert_{L^2}^2 + \lVert u\rVert_{L^2(I\times\Omega)}^2\bigr)
  \]
  for all $u \in L^2(I \times \Omega)$.
\end{lemma}

\begin{proof}
  Denote $y := \mathcal{A}(u)$. Then
  \[
    \langle y(t,\cdot),y_t(t,\cdot)\rangle + \langle \varphi(y)(t,\cdot),y(t,\cdot)\rangle
    + \langle \nabla y(t,\cdot), \nabla y(t,\cdot)\rangle = \langle y(t,\cdot),u(t,\cdot) \rangle
  \]
  for all $t \in I$, and thus
  \[
    \frac{d}{dt}\lVert y(t,\cdot)\rVert_{L^2}^2
    \le \frac{1}{2}\lVert y(t,\cdot)\rVert_{L^2}\lVert u(t,\cdot)\rVert_{L^2}
    \le \frac{1}{4}\lVert y(t,\cdot)\rVert_{L^2}^2 + \frac{1}{4}\lVert u(t,\cdot)\rVert_{L^2}^2
  \]
  for all $t \in I$.
  Since $\lVert y(0,\cdot)\rVert_{L^2}^2 = \lVert f \rVert_{L^2}^2$,
  this implies that
  \[
    \lVert y(t,\cdot)\rVert_{L^2}^2 \le H(t),
  \]
  where $H$ is the solution of the ODE
  \[
    H'(t) = \frac{1}{4}H(t) + \frac{1}{4}\lVert u(t,\cdot)\rVert_{L^2}^2,
    \qquad
    H(0) = \lVert f \rVert_{L^2}^2.
  \]
  A brief computation yields that
  \[
    H(t) = \Bigl(\lVert f \rVert_{L^2}^2 + \frac{1}{4}\int_0^t \lVert u(t,\cdot)\rVert_{L^2}^2 e^{-t/4}\,dt\Bigr)e^{t/4} 
    \le \Bigl(\lVert f \rVert_{L^2}^2 + \frac{1}{4} \lVert u\rVert_{L^2(I\times\Omega)}^2\Bigr)e^{t/4} 
  \]
  and thus
  \[
    \lVert y \rVert_{L^2}^2 \le \int_0^1 H(t)\,dt
    \le \Bigl(\lVert f \rVert_{L^2}^2 + \frac{1}{4} \lVert u\rVert_{L^2(I\times\Omega)}^2\Bigr) \int_0^1 e^{t/4}\,dt,
  \]
  which proves the assertion.
\end{proof}

\begin{proposition}\label{pr:well_posed_main}
  Let $K > 0$ and let $U_K$ be as in~\eqref{eq:UKdef}.
  Then
  \[
    \lim_{r \to \infty} \inf_{u \in U_K} \bigl\langle \mathcal{A}(ru+(1-r)u^\dagger),u-u^\dagger\bigr\rangle = +\infty.
  \]
\end{proposition}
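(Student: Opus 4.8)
The plan is to argue by contradiction, combining the compactness of $U_K$ with a parabolic energy estimate and a test-function argument. Suppose the claim fails. Then there are a constant $M$, a sequence $r_k \to \infty$ and elements $u_k \in U_K$ such that, writing $w_k := u_k - u^\dagger$ and $y_k := \mathcal{A}(u^\dagger + r_k w_k)$, one has $\langle y_k, w_k\rangle \le M$ for all $k$. Since $\lVert u\rVert_{L^2} \le \lVert u^\dagger\rVert_{L^2} + 1$ on $U_K$, this set is contained in a sublevel set of $u \mapsto \lVert u\rVert_{L^2}^2 + \mathcal{R}(u) + \mathcal{S}(u)$, so Theorem~\ref{th:compactsublevelset} shows $U_K$ is compact in $L^2(I\times\Omega)$. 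Passing to a subsequence I may assume $u_k \to \bar u$, and hence $w_k \to \bar w := \bar u - u^\dagger$ in $L^2(I\times\Omega)$, with $\lVert \bar w\rVert_{L^2} = 1$.

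The first step turns the bound $\langle y_k, w_k\rangle \le M$ into a priori bounds on $y_k$. Testing~\eqref{eq:PDE} (with source $u^\dagger + r_k w_k$) against $y_k$ itself and using $y_k(0,\cdot) = y_0$ gives the standard energy identity
\[
  r_k\langle y_k, w_k\rangle = \lVert y_k\rVert_{L^4}^4 + \lVert \nabla y_k\rVert_{L^2}^2 + \frac{1}{2}\lVert y_k(1,\cdot)\rVert_{L^2}^2 - \frac{1}{2}\lVert y_0\rVert_{L^2}^2 - \langle y_k, u^\dagger\rangle,
\]
where I used $r_k w_k = (u^\dagger + r_k w_k) - u^\dagger$. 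Estimating $\langle y_k, u^\dagger\rangle$ by the Cauchy--Schwarz, Poincar\'e and Young inequalities and inserting $\langle y_k, w_k\rangle \le M$ yields, for a constant $C$ independent of $k$, the bounds $\lVert \nabla y_k\rVert_{L^2}^2 \le C r_k$ and $\lVert y_k\rVert_{L^4}^4 \le C r_k$ (if $M < 0$ the left-hand side above is nonnegative while the right-hand side tends to $-\infty$, already a contradiction). In particular $\lVert y_k\rVert_{L^2} = O(r_k^{1/2})$, $\lVert \nabla y_k\rVert_{L^2} = O(r_k^{1/2})$ and $\lVert y_k\rVert_{L^4} = O(r_k^{1/4})$, so every norm of $y_k$ entering the weak formulation grows strictly slower than $r_k$.

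The second step exploits this in the weak formulation of~\eqref{eq:PDE}. Fix any $\varphi \in C_c^\infty((0,1)\times\Omega)$; then
\[
  -\langle y_k, \varphi_t\rangle + \langle y_k^3, \varphi\rangle + \langle \nabla y_k, \nabla\varphi\rangle = \langle u^\dagger, \varphi\rangle + r_k\langle w_k, \varphi\rangle.
\]
By the bounds above the three terms on the left are $O(r_k^{1/2})$, $O(r_k^{3/4})$ and $O(r_k^{1/2})$ respectively, where for the cubic term one uses H\"older's inequality with exponents $4/3$ and $4$, so that $\lvert\langle y_k^3,\varphi\rangle\rvert \le \lVert y_k\rVert_{L^4}^3\lVert\varphi\rVert_{L^4}$; hence the whole left-hand side is $o(r_k)$. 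Dividing by $r_k$ and letting $k\to\infty$, using $w_k \to \bar w$ in $L^2$, I obtain $\langle \bar w, \varphi\rangle = 0$. As $\varphi \in C_c^\infty((0,1)\times\Omega)$ was arbitrary and this space is dense in $L^2(I\times\Omega)$, it follows that $\bar w = 0$, contradicting $\lVert\bar w\rVert_{L^2} = 1$.

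The decisive and only genuinely nontrivial point is the second step. Monotonicity of $\mathcal{A}$ alone merely gives $\langle y_k, w_k\rangle \ge 0$, which is far from the required coercivity; the smallness assumption $\langle y_k, w_k\rangle \le M$ must first be converted, through the energy identity, into the sublinear growth of $y_k$. The crucial observation is then that the cubic nonlinearity contributes only at order $r_k^{3/4}$ — because it is controlled in $L^4$ rather than $L^2$ — which is still $o(r_k)$, so the forcing direction $\bar w$ cannot survive in the limit. The remaining care concerns justifying the energy identity and the weak formulation for the weak solution $y_k$ (the regularity of the gradient-flow solution), but these are standard for the semilinear heat equation~\eqref{eq:PDE}.
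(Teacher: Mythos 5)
Your proof is correct, but it takes a genuinely different route from the paper's. The paper argues directly and uniformly over $U_K$: it splits the pairing as $\tfrac{1}{r}\langle \mathcal{A}(ru+(1-r)u^\dagger),ru+(1-r)u^\dagger\rangle - \tfrac{1}{r}\langle \mathcal{A}(ru+(1-r)u^\dagger),u^\dagger\rangle$, bounds the second term by an a priori $L^2$ estimate on $\mathcal{A}$ (Lemma~\ref{le:coercLe4}), and for the first term combines compactness of $U_K$ with the one-dimensional BV result (Lemma~\ref{le:bv1d}) to produce, \emph{uniformly} in $u \in U_K$, a spatial test function $w \in C^2(\Omega)$ and a time interval $J$ with $\lvert J\rvert \ge \delta$ on which $\langle u(t,\cdot)-u^\dagger(t,\cdot),w\rangle_{L^2} \ge c$; testing the PDE against $w$ and running an ODE-comparison (Gronwall-type) argument then gives $\int_I \lVert y_r\rVert_{L^3}^3\,dt \ge C_1 r - C_2$, so that after division by $r$ the pairing grows at least like $r^{1/3}$. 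You instead argue by contradiction: compactness of $U_K$ (the same Theorem~\ref{th:compactsublevelset}; note you only need relative compactness, since $\lVert \bar w\rVert = \lim_k \lVert w_k \rVert = 1$ follows from strong convergence) yields a limit direction $\bar w \neq 0$; the energy identity converts the assumed bound $\langle y_k,w_k\rangle \le M$ into sublinear growth $\lVert y_k\rVert_{L^4} = O(r_k^{1/4})$, $\lVert \nabla y_k\rVert_{L^2} = O(r_k^{1/2})$; and the weak formulation against a fixed $\varphi \in C_c^\infty((0,1)\times\Omega)$ then forces $\langle \bar w,\varphi\rangle = 0$, hence $\bar w = 0$, a contradiction. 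Your route completely avoids the pointwise BV representative, Lemma~\ref{le:bv1d}, and the ODE comparison (the BV/Sobolev structure enters only through compactness of $U_K$), and is considerably shorter; what it gives up is the quantitative content --- the paper's argument actually exhibits a uniform growth rate of order $r^{1/3}$, whereas yours is purely qualitative. Both proofs ultimately rest on the same structural fact, namely that the cubic nonlinearity is controlled by the coercive energy terms with an exponent strictly greater than one (your observation $\lVert y_k^3\rVert_{L^{4/3}} = O(r_k^{3/4}) = o(r_k)$ plays the role of the paper's exponent $4/3$ in $(C_1r-C_2)^{4/3}$), together with standard parabolic energy identities for the gradient-flow solution, whose justification both you and the paper treat as standard.
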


\begin{proof}
  We can write
  \begin{multline*}
    \bigl\langle \mathcal{A}(ru+(1-r)u^\dagger),u-u^\dagger\bigr\rangle\\
    = \frac{1}{r} \bigl\langle \mathcal{A}(ru+(1-r)u^\dagger),ru + (1-r)u^\dagger)\bigr\rangle
    - \frac{1}{r} \bigl\langle\mathcal{A}(ru+(1-r)u^\dagger),u^\dagger\bigr\rangle
  \end{multline*}  
  From Lemma~\ref{le:coercLe3} we obtain that
  \begin{equation}\label{eq:well_posed_main:1}
    \lim_{r \to \infty} \frac{1}{r} \inf_{u \in U_K} \bigl\langle \mathcal{A}(ru+(1-r)u^\dagger),ru+(1-r)u^\dagger\bigr\rangle = +\infty.
  \end{equation}
  On the other hand, we obtain from Lemma~\ref{le:coercLe4} that
  \begin{multline}\label{eq:well_posed_main:2}
    \lim_{r \to \infty} \sup_{u\in U_K} \frac{1}{r}\lvert\langle \mathcal{A}(ru+(1-r)u^\dagger),u^\dagger\rangle\rvert\\
    \begin{aligned}
      &\le \lim_{r \to \infty} \sup_{u \in U_K} \frac{1}{r} \Bigl(C\bigl(\lVert f \rVert_{L^2}^2
      + \lVert ru+(1-r)u^\dagger\rVert_{L^2}^2\bigr)\Bigr)^{1/2}\lVert u^\dagger\rVert_{L^2} \\
      &\le \lim_{r \to \infty} \frac{1}{r}\Bigl(C\bigl(\lVert f \rVert_{L^2}^2 + 2r^2 + 2\lVert u^\dagger\rVert_{L^2}^2\bigr)\Bigr)^{1/2}\lVert u^\dagger \rVert_{L^2}\\
      & = \sqrt{2C}\lVert u^\dagger\rVert_{L^2} < \infty.
    \end{aligned}
  \end{multline}
  Here we have used that $\lVert u - u^\dagger \rVert_{L^2} = 1$ for all $u \in U_K$.
  Together, these estimates prove the assertion.
\end{proof}

\section{Convergence of Algorithm~\ref{alg:one}}\label{se:proof_algone}

In this section, we systematically ascertain the convergence properties pertaining to the primal-dual sequences, which are iteratively produced by the execution of Algorithm \ref{alg:one}, with the objective of reaching the optimal solution for inclusion problem \eqref{eq:problemmodified}. This analysis is conducted under the condition that the inertial parameters conform to a set of requisite technical assumptions.
The proof of this result is to a large degree a combination
of the proofs of \cite[Theorem 3.1]{Chen2019} and \cite[Theorem 2]{Bonet2023}.

For the remainder of this section, we denote by $(\hat{u},\hat{v})$ the primal-dual solution of~\eqref{eq:problemmodified}.

\subsection{Convergence: proof of Theorem \ref{th:convergence}}

We start by following the proof of~\cite[Theorem 3.1]{Chen2019},
where the convergence of Algorithm~\ref{alg:one} is shown for
the case where $\gamma_n = 0$ for all $n$ and $\mathcal{T}(u) = \nabla h(u)$
for a convex and differentiable function $h \colon \mathcal{U} \to \R$ with Lipschitz continuous
gradient.

Applying the same algebraic manipulations as in \cite{Chen2019},
we obtain (cf.~(25) in \cite{Chen2019})
\begin{multline*}
  \sum_{k=0}^{k_{\max}-1}\left(\beta \|u_n^{k+1}-\hat{u}\|^2+\alpha^2\|v_n^{k+1}-\hat{v}\|^2\right)\\
  \begin{aligned}    
    &\le \sum_{k=0}^{k_{\max}-1}\Bigr(\beta \|\bar{u}_n-\hat{u}\|^2-\beta\lVert u_n^k - u_n^{k+1}\rVert^2-\beta\lVert u_n^k-\bar{u}_n\rVert^2\\
    &{}\qquad\qquad  -2\alpha\beta\langle u_n^k-\hat{u}, \mathcal{T}(\bar{u}_n)-\mathcal{T}(\hat{u})\rangle + \alpha^2\lVert v_n^k - \hat{v}\rVert^2 - \alpha^2 \lVert v_n^{k+1} - v_n^k\rVert^2\\
    &{}\qquad\qquad +2\alpha\beta\langle u_n^k - u_n^{k+1}, L^*(v_n^{k+1}-v_n^k)\rangle\Bigr).
  \end{aligned}
\end{multline*}
The cocoercivity of $\mathcal{T}$ now lets us estimate
\begin{multline*}
  -\lVert u_n^k-\bar{u}_n\rVert^2-2\alpha\langle u_n^k-\hat{u}, \mathcal{T}(\bar{u}_n)-\mathcal{T}(\hat{u})\rangle\\
  \begin{aligned}
    &=\alpha^2 \lVert \mathcal{T}(\bar{u}_n) - \mathcal{T}(\hat{u})\rVert^2 - 2\alpha \langle \bar{u}_n-\hat{u},\mathcal{T}(\bar{u}_n)-\mathcal{T}(\hat{u})\rangle \\
    &{}\qquad\qquad -\|u_n^k-\bar{u}_n+\alpha (\mathcal{T}(\bar{u}_n)-\mathcal{T}(\hat{u}))\|^2\\
    &\le \alpha(\alpha -2C) \|\mathcal{T}(\bar{u}_n)-\mathcal{T}(\hat{u})\|^2-\|u_n^k-\bar{u}_n+\alpha (\mathcal{T}(\bar{u}_n)-\mathcal{T}(\hat{u}))\|^2.
  \end{aligned}
\end{multline*}
We thus obtain the estimate (cf.~(27) of \cite[Theorem 3.1]{Chen2019})
\begin{align*}
  &\sum_{k=0}^{k_{\max}-1}\bigl(\beta \|u_n^{k+1}-\hat{u}\|^2+\alpha^2\|v_n^{k+1}-\hat{v}\|^2\bigr)\\
  &\quad \le \sum_{k=0}^{k_{\max}-1}\Bigl(\beta \|\bar{u}_n-\hat{u}\|^2+\alpha^2\|v_n^k-\hat{v}\|^2+\alpha\beta(\alpha -2C) \|\mathcal{T}(\bar{u}_n)-\mathcal{T}(\hat{u})\|^2\\
  &\qquad -\beta\|u_n^k-\bar{u}_n+\alpha (\mathcal{T}(\bar{u}_n)-\mathcal{T}(\hat{u}))\|^2-\alpha^2\|v_n^{k+1}-v_n^k\|^2\\
  & \qquad +\alpha^2\beta \|L^*(v_n^{k+1}-v_n^k)\|^2-\beta \|u_n^k-u_{n}^{k+1}-\alpha L^*(v_n^{k+1}-v_n^k)\|^2\Bigr).
\end{align*}
Inserting the inertial term $\bar{u}_n= u_n+\gamma_n(u_n-u_{n-1})$ and using the Cauchy--Schwarz inequality in the above estimate, we deduce
\begin{align*}
  &\sum_{k=0}^{k_{\max}-1}\bigl(\beta \|u_n^{k+1}-\hat{u}\|^2+\alpha^2\|v_n^{k+1}-\hat{v}\|^2\bigr)\\
  &\quad \le \sum_{k=0}^{k_{\max}-1}\Bigl(\beta( \|\bar{u}_n-\hat{u}\|^2+\gamma_n^2\|u_n-u_{n-1}\|^2+2\gamma_n\|u_n-\hat{u}\|\|u_n-u_{n-1}\|)\\
  & \qquad +\alpha^2\|v_n^k-\hat{v}\|^2+\alpha\beta(\alpha -2C) \|\mathcal{T}(\bar{u}_n)-\mathcal{T}(\hat{u})\|^2\\
  &\qquad -\beta\|u_n^k-\bar{u}_n+\alpha (\mathcal{T}(\bar{u}_n)-\mathcal{T}(\hat{u}))\|^2-\alpha^2\|v_n^{k+1}-v_n^k\|^2\\
  & \qquad +\alpha^2\beta \|L^*(v_n^{k+1}-v_n^k)\|^2-\beta \|u_n^k-u_{n}^{k+1}-\alpha L^*(v_n^{k+1}-v_n^k)\|^2\Bigr).
\end{align*}
By the convexity of $\|u-\hat{u}\|^2$ (as a function of $u$) and the last line of Algorithm \ref{alg:one}, we get
\begin{align}
  \beta &k_{max}\|u_{n+1}-\hat{u}\|^2+\alpha^2\|v_{n+1}^0-\hat{v}\|^2\nonumber\\
        &\le \beta k_{\max}( \|u_n-\hat{u}\|^2+\gamma_n^2\|u_n-u_{n-1}\|^2+2\gamma_n\|u_n-\hat{u}\|\|u_n-u_{n-1}\|)\nonumber\\
        & \quad+\alpha^2 \|v_n^0-\hat{v}\|^2 +\alpha \beta (\alpha -2C)\|\mathcal{T}(\bar{u}_n)-\mathcal{T}(\hat{u})\|^2\nonumber\\
        &\quad -\sum_{k=0}^{k_{\max}-1}\Bigl(\beta\bigl\lVert u_n^k-\bar{u}_n+\alpha (\mathcal{T}(\bar{u}_n)-\mathcal{T}(\hat{u}))\bigr\rVert^2+\alpha^2\|v_n^{k+1}-v_n^k\|_L^2\nonumber\\
        & \quad +\beta \|u_n^k-u_{n}^{k+1}-\alpha L^*(v_n^{k+1}-v_n^k)\|^2\Bigr),\label{eq1:thm14}
\end{align}
where $\|\cdot\|^2_L=\|\cdot\|^2-\beta \|L^*(\cdot)\|^2$.
Since the terms inside the summation of above inequality are all positive, we have
\begin{align}
  \beta &k_{max}\|u_{n+1}-\hat{u}\|^2+\alpha^2\|v_{n+1}^0-\hat{v}\|^2\nonumber\\
        &\le \beta k_{\max} \|u_n-\hat{u}\|^2+\alpha^2 \|v_n^0-\hat{v}\|^2\nonumber\\
        &\quad +\beta k_{\max}(\gamma_n^2\|u_n-u_{n-1}\|^2+2\gamma_n\|u_n-\hat{u}\|\|u_n-u_{n-1}\|),\label{eq:bounded}
\end{align}
which is same as (33) of \cite[Theorem 2]{Bonet2023}.
We now follow that proof further and obtain that the sequence
$\{(u_n,v_n^0)\}$ is bounded and that the sequence
$\{\beta k_{\max} \lVert \hat{u}-u_n\rVert^2 + \alpha^2 \lVert\hat{v}-v_n^0\rVert^2\}$
converges. 
By boundedness of $\{(u_n,v_n^0)\}$ and convergence of $\{\beta k_{\max} \lVert \hat{u}-u_n\rVert^2 + \alpha^2 \lVert\hat{v}-v_n^0\rVert^2\}$, there exists a  point $(u^{\dag}, v^{\dag})$ such that $(u_{n_j},v_{n_j}^0)\to (u^{\dag}, v^{\dag})$ as $j\to \infty$.

Now, first summing the relation \eqref{eq1:thm14} from $n=0$ to $n=N$ and then taking the limit $N\to \infty$ and using the condition \eqref{cond:inertial}, we observe the following:
\begin{align*}
  \lim_{n\to \infty}\|\mathcal{T}(\bar{u}_n)-\mathcal{T}(\hat{u})\|^2 &= 0,\\
  \lim_{n\to \infty} \|u_n^k-\bar{u}_n+\alpha (\mathcal{T}(\bar{u}_n)-\mathcal{T}(\hat{u}))\|^2&=0, &\qquad &k:0,1, \dots, k_{\max}-1,\\
  \lim_{n\to \infty} \|v_n^{k+1}-v_n^k\|_{L}^2&= 0, &\qquad& k:0,1,\dots, k_{\max}-1,\\
  \lim_{n\to \infty} \|u_n^k-u_{n}^{k+1}-\alpha L^*(v_n^{k+1}-v_n^k)\|^2 &=0, &\qquad& k:0,1,\dots, k_{\max}-1.
\end{align*}
The above estimates consequently imply that
\begin{align*}
  u_{n_j}^{k+1}\to u^{\dag}, \qquad v_{n_j}^{k+1}\to v^{\dag} \qquad k:0, 1, \dots, k_{\max}-1,~ \text{as}~ j\to \infty.
\end{align*}
Due to the continuous nature of the proximal operation  of the Algorithm \ref{alg:one}, it can be deduced that $(u^{\dag}, v^{\dag})$ adheres to \eqref{eq:lemma}, which defines the solution of the problem \eqref{eq:problemmodified}. Since $(u^{\dag}, v^{\dag})$ is a saddle point and convergence of the sequence $\{\beta k_{\max}\|u_n-u^{\dag}\|^2+\alpha^2\|v_n^0-v^{\dag}\|\}$, it admits a subsequence converging to zero. Hence the sequence $\{(u_n, v_n^0)\}$ converges to $(u^{\dag}, v^{\dag})$.

\section{Application to \eqref{eq:problem}}\label{se:app_to_our_problem}

We now discuss how to apply Algorithm~\ref{alg:one}
to the solution of~\eqref{eq:problem_sd_b}.
That is, we use Algorithm~\ref{alg:one} with
$\mathcal{T}(u) = \mathcal{A}(u) - y^\delta$,
$f = \lambda \mathcal{R}_\Gamma$, $g = \mu\mathcal{S}_\Gamma$,
and $L = D_\Gamma$.

In order to apply the convergence result Theorem~\ref{th:convergence},
we first have to verify that the operator
$\mathcal{A}$ is cocoercive.

\begin{lemma}
  The operator $\mathcal{A}$ is cocoercive.
  That is, there exists $C > 0$ such that
  \[
    \langle \mathcal{A}(u)-\mathcal{A}(v),u-v\rangle_{L^2}
    \ge C \lVert \mathcal{A}(u)-\mathcal{A}(v)\rVert_{L^2}^2
  \]
  for all $u$, $v \in L^2(I \times \Omega)$.
\end{lemma}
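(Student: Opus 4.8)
The plan is to prove cocoercivity of $\mathcal{A}$ by exploiting the gradient-flow structure of the PDE~\eqref{eq:PDE}, which was already used to establish the strict monotonicity and hemicontinuity of $\mathcal{A}$ in the verification of Assumption~\ref{it:well_posed2}. Recall that $\mathcal{G}$ is convex, so its subdifferential $\partial\mathcal{G}$ is a (maximal) monotone operator on $L^2(\Omega)$. Writing $y := \mathcal{A}(u)$ and $z := \mathcal{A}(v)$ for the two solutions, the difference $y - z$ satisfies
\[
  (y-z)_t + \bigl(\partial\mathcal{G}(y) - \partial\mathcal{G}(z)\bigr) \ni u - v,
  \qquad (y-z)(0,\cdot) = 0.
\]

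The key step is to pair this equation with $y - z$ and integrate over space and time. First I would test with $y(t,\cdot) - z(t,\cdot)$ in $L^2(\Omega)$ at fixed $t$, obtaining
\[
  \tfrac{1}{2}\tfrac{d}{dt}\lVert y(t,\cdot) - z(t,\cdot)\rVert_{L^2}^2
  + \bigl\langle \partial\mathcal{G}(y) - \partial\mathcal{G}(z), y - z\bigr\rangle_{L^2}
  = \bigl\langle u(t,\cdot) - v(t,\cdot), y(t,\cdot) - z(t,\cdot)\bigr\rangle_{L^2}.
\]
The monotonicity of $\partial\mathcal{G}$ makes the second term on the left nonnegative, so after integrating in $t$ from $0$ to $1$ and using the zero initial condition I would arrive at
\[
  \tfrac{1}{2}\lVert (y-z)(1,\cdot)\rVert_{L^2}^2
  + \int_I \bigl\langle \partial\mathcal{G}(y) - \partial\mathcal{G}(z), y - z\bigr\rangle_{L^2}\,dt
  \le \langle u - v, y - z\rangle_{L^2(I\times\Omega)}
  = \langle u - v, \mathcal{A}(u) - \mathcal{A}(v)\rangle_{L^2}.
\]
This already gives monotonicity; the task is to extract a term controlling $\lVert \mathcal{A}(u) - \mathcal{A}(v)\rVert_{L^2}^2$ from the left-hand side.

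To get cocoercivity rather than mere monotonicity, the plan is to obtain a Poincar\'e-type lower bound on the dissipation term. Since $\partial\mathcal{G}(y)$ contains the contribution $-\Delta y + y^3$, the cubic term is monotone and can be discarded, while the Laplacian part yields $\int_I \lVert \nabla(y-z)\rVert_{L^2}^2\,dt$. Because $y - z \in H^1_0(\Omega)$ for almost every $t$, the Poincar\'e inequality gives $\lVert \nabla(y-z)\rVert_{L^2}^2 \ge c_P \lVert y - z\rVert_{L^2}^2$ with $c_P > 0$ the Poincar\'e constant of $\Omega$. Integrating over $I$ therefore produces $\int_I \lVert \nabla(y-z)\rVert_{L^2}^2\,dt \ge c_P \lVert \mathcal{A}(u) - \mathcal{A}(v)\rVert_{L^2(I\times\Omega)}^2$, and combining this with the energy identity above yields the cocoercivity estimate with $C = c_P$ (the boundary term at $t=1$ only helps).

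The main obstacle I anticipate is rigorously justifying the pairing and the chain rule $\tfrac{1}{2}\tfrac{d}{dt}\lVert y - z\rVert_{L^2}^2 = \langle (y-z)_t, y-z\rangle$ at the level of the weak solutions produced by the gradient-flow theory, since $(y-z)_t$ only lives in a dual space and the subdifferential terms are set-valued. This is handled by appealing to the standard regularity of gradient-flow solutions in the sense of Brezis (as in~\cite{Bar76}), where $y, z \in H^1(I; L^2(\Omega)) \cap L^2(I; H^1_0(\Omega))$ and the integration-by-parts formula is valid. Once that regularity and the monotonicity of $\partial\mathcal{G}$ are in hand, the Poincar\'e step is routine and the constant $C$ depends only on $\Omega$.
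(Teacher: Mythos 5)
Your proposal is correct and takes essentially the same route as the paper's proof: both pair the difference of the two equations with $y-z$, use the common initial condition so the time-derivative term contributes the nonnegative boundary term $\tfrac{1}{2}\lVert (y-z)(1,\cdot)\rVert_{L^2}^2$, discard the monotone cubic contribution, and lower-bound the remaining gradient term via the Poincar\'e inequality on $H^1_0(\Omega)$, yielding the cocoercivity constant $C = c_P$. The only difference is presentational: you phrase the estimate abstractly through the subdifferential $\partial\mathcal{G}$ and the associated regularity theory, whereas the paper carries out the identical computation directly on the PDE terms.
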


\begin{proof}
  Let $u$, $v \in L^2(I\times \Omega)$ and denote
  $y = \mathcal{A}(u)$ and $z = \mathcal{A}(v)$.
  Then
  \begin{multline*}
    \langle \mathcal{A}(u)-\mathcal{A}(v),u-v\rangle_{L^2}
    = \int_{I \times \Omega} (y-z)(u-v)\,d(t\otimes x)\\
    = \int_\Omega \int_I (y-z)(y_t-z_t)\,dt\,dx
    + \int_{I\times \Omega} (\varphi(y)-\varphi(z))(y-z)\,d(t\otimes x)
    + \int_{I \times \Omega} \lVert \nabla(y-z)\rVert^2\,d(t\otimes x)\\
    \ge \frac{1}{2}\int_\Omega (y(1,x)-z(1,x))^2\,dx
    + 0 + \int_{I \times \Omega} \lVert \nabla(y-z)\rVert^2\,d(t\otimes x).
  \end{multline*}
  Moreover, we obtain from the Poincar\'e inequality
  for the set $\Omega$ that
  \[
    \int_\Omega \lVert \nabla y(t,x) - \nabla z(t,x)\rVert^2\,dx
    \ge C \int_\Omega (y(t,x)-z(t,x))^2\,dx
  \]
  for some $C > 0$ and almost every $t \in I$.
  Thus
  \[
    \langle \mathcal{A}(u)-\mathcal{A}(v),u-v\rangle_{L^2}
    \ge C\int_I\int_\Omega (y(t,x)-z(t,x))^2\,dx\,dt
    = C\lVert y-z\rVert_{L^2}^2,
  \]
  where $C$ is the constant from the Poincar\'e inequality
  for the set $\Omega$.
\end{proof}

\begin{remark}
  In the one-dimensional case with $\Omega = [0,1] \subset \mathbb{R}$,
  it is known that the optimal constant in the Poincar\'e
  inequality is $C = \pi^2$.
\end{remark}

Next, we will provide explicit formulas for
the prox-operators that have to be evaluated in each step of the
algorithm.
For that, we will identify a function
$u \in L^2_\Gamma(I\times\Omega)$ with the $N$-tuple
$(u_i)_{i=1,\ldots,N} \in L^2(\Omega)^N$ satisfying
\[
  u(t,x) = u_i(x) \qquad\text{ if } t \in [t_{i-1},t_i),\qquad i = 1,\ldots,N.
\]
The function $\prox_{\alpha\mu\mathcal{S}}(w)$ is defined as
\[
  \prox_{\alpha\mu\mathcal{S}}(w)
  =
  \min_{u \in L^2_\Gamma(I\times\Omega)} \Bigl[\frac{1}{2}\int_I \int_\Omega (u(t,x)-w(t,x))^2 + \alpha\mu \lvert \nabla_x u(t,x)\rvert^2\,dx\,dt\Bigr].
\]
Since we are only taking derivatives in the space variable,
the expression to be minimized includes no
coupling between the different times $t$ apart from the
requirement that $u \in L^2_\Gamma(I\times \Omega)$,
and thus we can minimize it separately on each strip $[t_{i-1},t_i) \times \Omega$.
Identifying the restriction of $u$ to this strip with $u_i$ we therefore obtain the problem
\[
  \min_{u_i \in L^2(\Omega)} \Bigl[\frac{1}{2}\int_{t_{i-1}}^{t_i}\int_\Omega (u_i(x)-w(t,x))^2 + \alpha\mu \lvert \nabla u_i(x)\rvert^2\,dx\,dt\Bigr]
\]
for $i =1,\ldots,N$.
The first order optimality condition (or Euler--Lagrange equation) for this
problem is the condition that $u_i \in H^1(\Omega)$ and
\[
  \int_{t_{i-1}}^{t_i} \int_\Omega (u_i(x)-w(t,x)) v(x) + \alpha\mu \nabla u_i(x)\cdot \nabla v(x)\,dx\,dt = 0
\]
for all $v \in H^1(\Omega)$.
This is the weak form of the equation
\[
  \begin{aligned}
    (t_i-t_{i-1}) \bigl(u_i(x) - \alpha\mu\Delta u_i(x)\bigr) &= \int_{t_{i-1}}^{t_i} w(t,x)\,dt &&\text{ for } x \in \Omega\\
    \partial_\nu u_i(x) &= 0 &&\text{ for } x \in \partial\Omega.
  \end{aligned}
\]
Thus
\[
  (\prox_{\alpha\mu\mathcal{S}}(w))_i = (I-\alpha\mu \Delta)^{-1}\Bigl(\frac{1}{t_i-t_{i-1}}\int_{t_{i-1}}^{t_i} w(t,x)\,dt\Bigr),
\]
where we solve the PDE with homogeneous Neumann boundary conditions on $\partial\Omega$.

Next, we note that $\mathcal{R}_\Gamma(u) = \sum_{i=1}^{N-1} \lVert u_i \rVert_{L^2}$
is positively homogeneous.
Thus we have that
\[
  (\lambda\mathcal{R}_\Gamma)^*(v)
  = \begin{cases}
    + \infty & \text{ if } \lVert v_i\rVert_{L^2} > \lambda \text{ for some } 1\le i \le N-1,\\
    0 & \text{ if } \lVert v_i \rVert_{L^2} \le \lambda \text{ for all } 1 \le i \le N-1.
  \end{cases}
\]
As a consequence, the prox-operator $\prox_{\beta\alpha^{-1}(\lambda\mathcal{R}_\Gamma)^*}$
is a componentwise projection onto the ball of radius $\lambda$ in $L^2(\Omega)$,
that is,
\[
  (\prox_{\beta\alpha^{-1} (\lambda\mathcal{R})^*}(v))_i
  =
  \left\{
    \begin{aligned}
      \frac{\lambda v_i}{\lVert v_i\rVert_{L^2}} &\qquad\text{ if } \lvert v_i \rVert_{L^2} > \lambda\\
      v_i &\qquad\text{ if } \lVert v_i \rVert_{L^2} \le \lambda
    \end{aligned}
  \right\}
  = \frac{\lambda v_i}{\max\{\lambda,\lVert v_i \rVert_{L^2}\}}.
\]

Finally, we see that $D_\Gamma^* \colon L^2(\Omega)^{N-1} \to L^2(I\times\Omega)$
actually maps into the subspace $L^2_\Gamma(\Omega)$, and we have that
\[
  (D_\Gamma^* v)_i =
  \begin{cases}
    \displaystyle{-\frac{1}{t_1-t_0} v_1} & \text{ if } i=1,\\
    \displaystyle{\frac{1}{t_{i}-t_{i-1}} v_i - \frac{1}{t_{i+1}-t_i} v_{i+1}} & \text{ if } 2 \le i \le N,\\
    \displaystyle{\frac{1}{t_N-t_{n-1}}v_N} & \text{ if } i=N,
  \end{cases}
\]
where we have again identified the function
$(D_\Gamma^* v) \in L^2_\Gamma(I \times \Omega)$ with
and $N$-tuple in $L^2(\Omega)^N$.

The resulting method is summarized in Algorithm~\ref{alg:applied}.

\begin{algorithm}
  \caption{Nested inertial primal-dual algorithm for \eqref{eq:problem_sd}}\label{alg:applied}
  \SetKwInput{Initialize}{Initialization}
  \Initialize{
    Choose $u \in L^2(\Omega)^N$, $v \in L^2(\Omega)^{N-1}$,
    $0<\alpha < 2C$, $0<\beta < 1/\lVert D_\Gamma \rVert^2$, $k_{\max}\in \mathbb{N}$, $\{\gamma_n\}\subseteq \mathbb{R}_{\ge 0} $\;
    Set $u^{(\text{old})} = 0$\;
  }
  \BlankLine
  \For{$n=1,2,\ldots$}{
    $\bar{u} \leftarrow u + \gamma_n(u-u^{(\textrm{old})})$\;
    $w \leftarrow \mathcal{A}(\bar{u}) - y^\delta$\;
    \For{$i=1,\ldots,N$}{
      ${\displaystyle w_i \leftarrow \frac{1}{t_i-t_{i-1}} \int_{t_{i-1}}^{t_i} w(t,x) - y^\delta(t,x)\,dt}$\;
    }
    \For{$k=0,1,\ldots,k_{{\max}}-1$}{
      \For{$i=1,\ldots,N$}{
        $u_i^{(k)} \leftarrow (I-\alpha\mu\Delta)^{-1}(\bar{u}_i - \alpha(w_i + (D_\Gamma^* v)_i))$\;
      }
      \For{$i=1,\ldots,N-1$}{
        $v_i \leftarrow \lambda\dfrac{v_i + \beta\alpha^{-1} (D_\Gamma u^{(k)})_i}{\max\{\lambda,\lVert v_i + \beta\alpha^{-1} (D_\Gamma u^{(k)})_i \rVert\}}$\;
      }
    }
    \For{$i=1,\ldots,N$}{
      $u_i^{(k_{\text{max}})} \leftarrow (I-\alpha\mu\Delta)^{-1}(\bar{u}_i - \alpha(w_i + (D_\Gamma^* v)_i))$\;
    }
    $u^{(\text{old})} \leftarrow u$\;
    ${\displaystyle u \leftarrow \frac{1}{k_{\max}} \sum_{k=1}^{k_{\max}} u^{(k)}}$\;
  }
\end{algorithm} 

\section{Numerical Experiments}\label{se:experiments}

We have tested the method in the one-dimensional case $\Omega = [0,1]$.
For the true solution of the inverse problem we have considered two examples,
first the function
\begin{equation}\label{eq:u1}
  u_1^\dagger(x,t) = \begin{cases}
    4 \sin(\pi x) & \text{ if } 0 < t < \frac{1}{4},\\
    2\cos(7\pi x) & \text{ if } \frac{1}{4} < t < \frac{2}{3},\\
    5 x-\cos(\pi x) & \text{ if } \frac{2}{3} < t < \frac{3}{4},\\
    5 - 4\sin(\pi x) & \text{ if } \frac{3}{4} < t < 1,
  \end{cases}
\end{equation}
then the function
\begin{equation}\label{eq:u2}
  u_2^\dagger = \sin(2\pi t)^5 \cos(2\pi x).
\end{equation}
For the initial condition for the PDE we used the function $y_0 = 0$.
For the non-linearity we have used the function $\varphi(y) = y^3$.

All calculations have been performed in python using the NumPy and SciPy libraries
and running on a MacBook Pro equipped with an Apple M1 processor and 16GB of RAM.

The function $u_1^\dagger$ satisfies the assumptions that
our regularization method makes, in that $u_1^\dagger$ is piecewise
constant in the time variable, but not in the space variable,
where we have significant, but smooth, variations.
In contrast, the function $u_2^\dagger$ is smooth both in time and space,
though there are still large regions, where the function is \emph{almost}
constant in time.
Still, the function $u_2^\dagger$ has finite total variation
and thus falls into the theoretical setting considered here.

For the numerical solution of the PDE, we have used a semi-implicit
Crank--Nicolson method.
The data $u_i^\dagger$ as well as the corresponding solution $y_i = \mathcal{A}(u_i^\dagger)$
of the forward model are shown in Figure~\ref{fi:forward1}.

\begin{figure}
  \[
    \begin{aligned}
      \includegraphics[width=0.45\textwidth]{./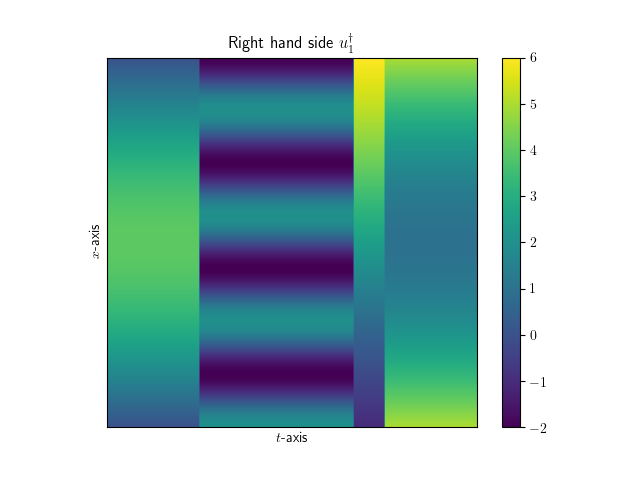}
      &\includegraphics[width=0.45\textwidth]{./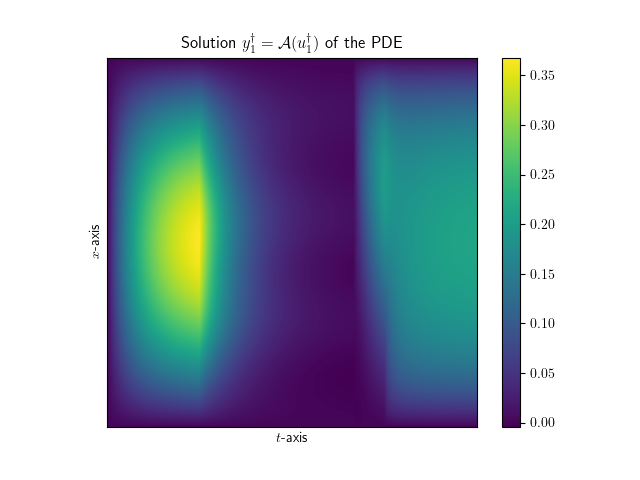}\\
      \includegraphics[width=0.45\textwidth]{./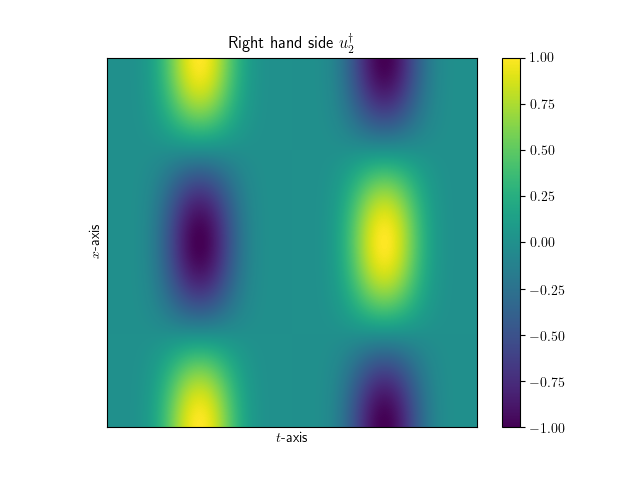}
      &\includegraphics[width=0.45\textwidth]{./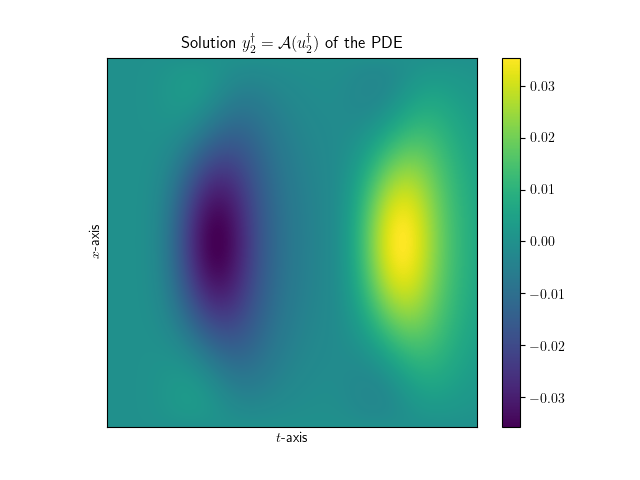}
    \end{aligned}
  \]
  \caption{\label{fi:forward1}
    Solution of the PDE~\eqref{eq:PDE}.
    \emph{First row, left:} right hand side $u = u_1^\dagger$ as defined in~\eqref{eq:u1}.
    \emph{First row, right:} corresponding solution $y_1 := \mathcal{A}(u_1^\dagger)$.
    \emph{Second row, left:} right hand side $u = u_2^\dagger$ as defined in~\eqref{eq:u2}.
    \emph{Second row, right:} corresponding solution $y_2 := \mathcal{A}(u_2^\dagger)$.}
\end{figure}

For the numerical tests of the algorithm and the regularization
method, we have generated noisy data $n^\delta_i$ by sampling
from an i.i.d.~Gaussian random variable with mean $0$ and
standard deviation $\sigma = \delta \lVert y_i \rVert_{L^2}$
and then defined $y_i^\delta = y_i + n_i^\delta$.
Thus the noise level $\delta$ always refers to the relative noise level
as compared to the true data $y_i = \mathcal{A}(u^\dagger)$.

Figure~\ref{fi:regsol} shows the reconstructions we obtain
for a noise level $\delta = 10^{-2}$ by solving~\eqref{eq:problem}.
For the true solution $u_1^\dagger$, we have set the regularisation parameters
to $\lambda = 10^{-4}$ and $\mu = 10^{-5}$;
for the true solution $u_2^\dagger$ to $\lambda = 2\cdot 10^{-6}$ and $\mu = 10^{-5}$.
The general shape of the true solution is well reconstructed, and
the method is also able to reconstruct the jumps in the function $u_1^\dagger$,
although the position of the jumps is not detected precisely.
Also, the error is relatively large at the boundary of the domain,
which can be explained by the fact that we solve the PDE~\eqref{eq:PDE}
with Dirichlet boundary conditions. Thus the function $u^\dagger$ has
only very little influence on $y^\dagger$ near the boundary,
which makes it hard to reconstruct.

\begin{figure}
  \[
    \begin{aligned}
      \includegraphics[width=0.45\textwidth]{./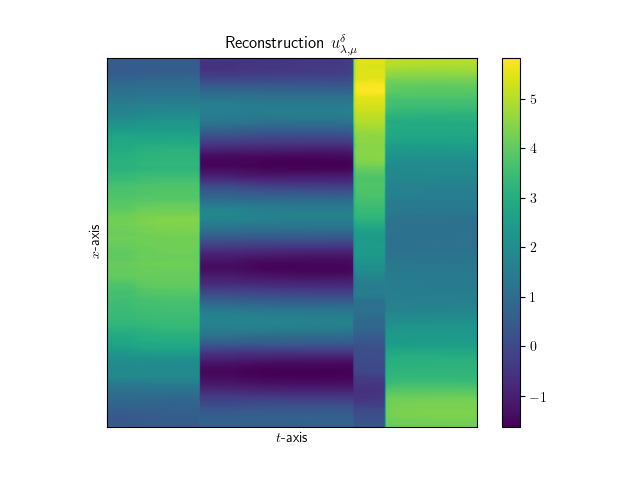}
      &\includegraphics[width=0.45\textwidth]{./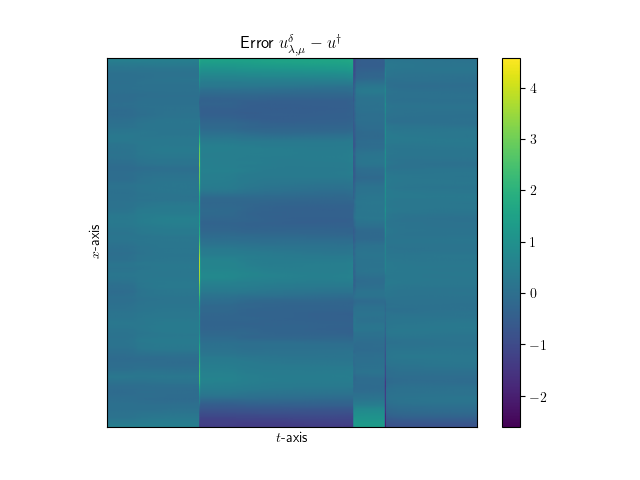}\\
      \includegraphics[width=0.45\textwidth]{./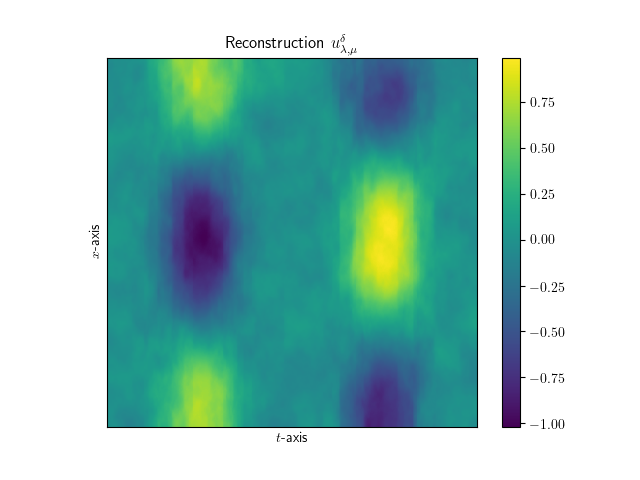}
      &\includegraphics[width=0.45\textwidth]{./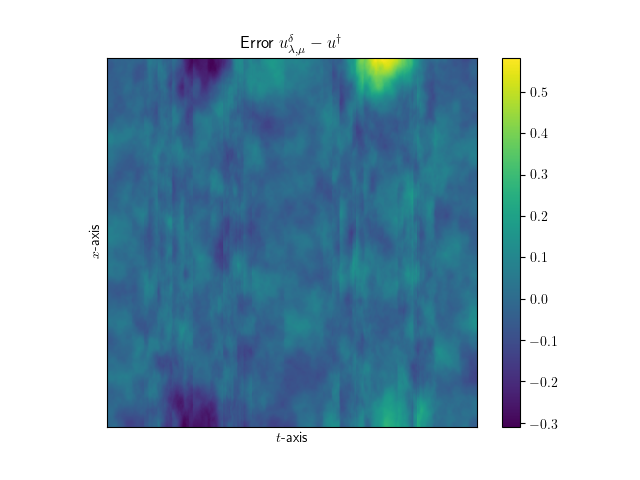}
    \end{aligned}
  \]
  \caption{\label{fi:regsol}
    Regularised solution of~\eqref{eq:ip} by the solution of~\eqref{eq:problem}.
    \emph{First row, left:} regularised solution $u_{\lambda,\mu}^\delta$ with noisy data
    $y^\delta = u_1^\dagger + n^\delta$ with $u_1^\dagger$ as defined in~\eqref{eq:u1},
    noise level $\delta = 10^{-2}$,
    and regularisation parameters $\lambda = 10^{-4}$ and $\mu = 10^{-5}$.
    \emph{First row, right:} resulting error $u_{\lambda,\mu}^\delta - u^\dagger$.
    \emph{Second row, left:} regularised solution $u_{\lambda,\mu}^\delta$ with noisy data
    $y^\delta = u_2^\dagger + n^\delta$ with $u = u_2^\dagger$ as defined in~\eqref{eq:u2},
    noise level $\delta = 10^{-2}$,
    and regularisation parameters $\lambda = 2\cdot 10^{-6}$ and $\mu = 10^{-5}$.
    \emph{Second row, right:} resulting error $u_{\lambda,\mu}^\delta - u^\dagger$.
  }
\end{figure}

\subsection*{Convergence of the algorithm}

In order to demonstrate the convergence properties of Algorithm~\ref{alg:applied},
we have applied the algorithm to a noisy version $y_1^\delta = y_1+n^\delta$
of $y_1$ with a noise level $\delta = 0.01$.
For the regularization parameters we chosen the values $\lambda = 10^{-5}$
and $\mu = 2\cdot 10^{-6}$. The number of iterations for the inner loop
in Algorithm~\ref{alg:applied} was set to $k_{\text{max}} = 5$.

Figure~\ref{fi:convergence_behaviour} (upper left) shows how the size of the updates
$\lVert u^{\textrm{old}}-u\rVert$ changes over the iterations.
We see how these step lengths roughly decrease linearly with the number of steps.
In addition, we see in~\ref{fi:convergence_behaviour} (upper right)
how the different term $\lVert \mathcal{A}(u) - y^\delta\rVert$,
$\lambda\mathcal{R}(u)$, $\mu\mathcal{S}(u)$ change with the iterations.

Next we recall that we are solving the inclusion~\eqref{eq:pdinclusion},
which for our problem reads
\[
  \begin{aligned}
    \mathcal{A}(u) -y^\delta + \mu\partial\mathcal{S}_\Gamma(u) + D_\Gamma^* v &= 0,\\
    v &\in \lambda\partial\mathcal{R}_\Gamma(D_\Gamma u).
  \end{aligned}
\]
Thus we can measure the convergence by assessing to which extent this equation
and inclusion are satisfied.
In the case of the inclusion $v \in \lambda \partial\mathcal{R}_\Gamma(D_\Gamma u)$
we exploit the fact that $\mathcal{R}_\Gamma$ is positively homogeneous.
As a consequence of this, the inclusion is equivalent to
\[
  \mathcal{R}_\Gamma^*(v) < \infty \qquad\text{ and }\qquad \langle v, D_\Gamma u\rangle = \mathcal{R}_\Gamma(D_\Gamma u).
\]
Since the inequality $\mathcal{R}_\Gamma^*(v) < \infty$ is automatically satisfied
in the algorithm, it thus makes sense to assess the convergence of the algorithm
using the values
\begin{equation}\label{eq:primal_and_dual_residual}
  \begin{aligned}
    r_1(u,v) &:= \lVert \mathcal{A}(u) -y^\delta + \mu\partial\mathcal{S}_\Gamma(u) + D_\Gamma^* v \rVert,\\
    r_2(u,v) &:= \lvert \mathcal{R}_\Gamma(D_{\Gamma} u) - \langle v,D_\Gamma u\rangle \rvert,
  \end{aligned}
\end{equation}
which we will refer to as the ``primal residual'' $r_1$ and the ``dual residual'' $r_2$.
The lower row in Figure~\ref{fi:convergence_behaviour} shows how these residuals
decrease with the iterations.

\begin{figure}
  \[
    \begin{aligned}
      &\includegraphics[width=0.3\textwidth]{./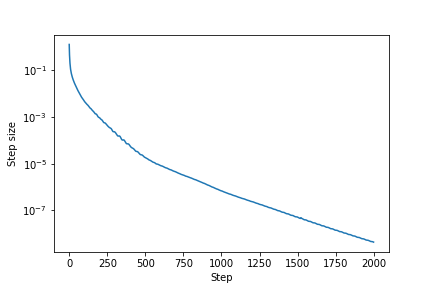} &
      &\includegraphics[width=0.3\textwidth]{./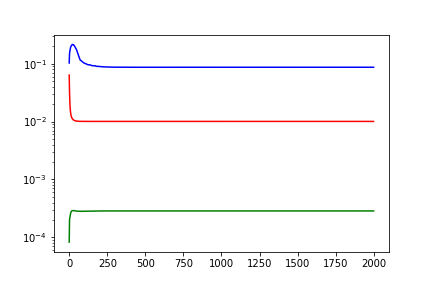} \\
      &\includegraphics[width=0.3\textwidth]{./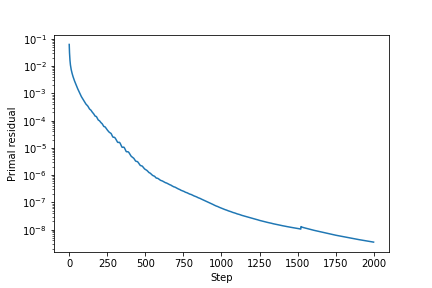} &
      &\includegraphics[width=0.3\textwidth]{./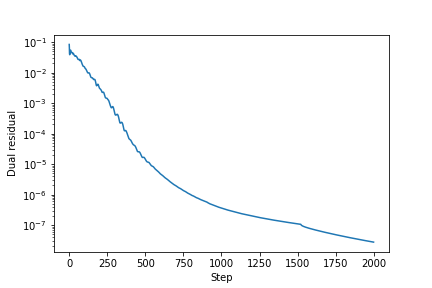}
    \end{aligned}
  \]
  \caption{\label{fi:convergence_behaviour}
    Demonstration of the convergence behavior of Algorithm~\ref{alg:applied}.
    \emph{Upper left:} Size of the updates $\lVert u^{(\text{old})}-u\rVert_2$.
    \emph{Upper right} Values of $\lVert \mathcal{A}(u)-y^\delta\rVert_2$ (red),
    the total variation regularization term $\lambda \mathcal{R}(u)$ (blue),
    and the $H^1$ regularization term $\mu\mathcal{S}(u)$ (green).
    \emph{Lower left:} Value of the primal residual $r_1(u,v)$.
    \emph{Lower right:} Value of the dual residual $r_2(u,v)$.
  }
\end{figure}

\subsection*{Semi-convergence of the regularization method}

Next, we study the behavior of the regularized solutions as
noise levels and regularization parameters simultaneously tend to $0$.
According to Theorem~\ref{th:well_posed}, the regularized solutions
converge in this case to the true solution, provided that the regularization parameters
tend to $0$ in such a way that the ratios $\lambda/\delta$ and $\mu/\delta$
remain bounded.

In order to verify this result numerically,
we have selected initial regularization parameters
$\lambda_0$, $\mu_0 > 0$, and an initial noise level $\delta_0$.
We have then applied our method with parameters
$\lambda_i = 2^{-i} \lambda_0$, $\mu_i = 2^{-i} \mu_0$,
and noise level $\delta_i = 2^{-i} \delta_0$ to
the two true solutions $u_1^\dagger$ and $u_2^\dagger$
defined in~\eqref{eq:u1} and~\eqref{eq:u2}.

Figure~\ref{fi:semiconvergence} shows convergence plots for these experiments.
We see there that the error $\lVert u-u_i^\dagger \rVert_{L^2}$
for both of the true solutions $u_1^\dagger$ and $u_2^\dagger$
roughly behaves like $O(\delta^{1/2})$
whereas the residual $\lVert \mathcal{A}(u)-u_i^\dagger\rVert_{L^2}$
behaves like $O(\delta)$.
This is in agreement with the theory for standard Lavrentiev regularization
for linear operators, where it is known that $O(\delta^{1/2})$
is the best possible rate for the error in non-trivial situations, see \cite{Pla17}.
We note here, though, that as of now there exist no theoretical results
concerning convergence rates with respect to the norm for our method.
In~\cite{GraHil20}, convergence rates with respect to the
Bregman distance defined by $\mathcal{R}+\mathcal{S}$ have
been derived. Because this functional is not strictly convex,
let alone $p$-convex, these rates cannot be translated to rates
in the $L^2$-norm, though.
Also, it is not clear whether the variational source condition required
in~\cite{GraHil20} holds for the test functions used for our experiments.

\begin{figure}
  \[
    \begin{aligned}
      \includegraphics[width=0.4\textwidth]{./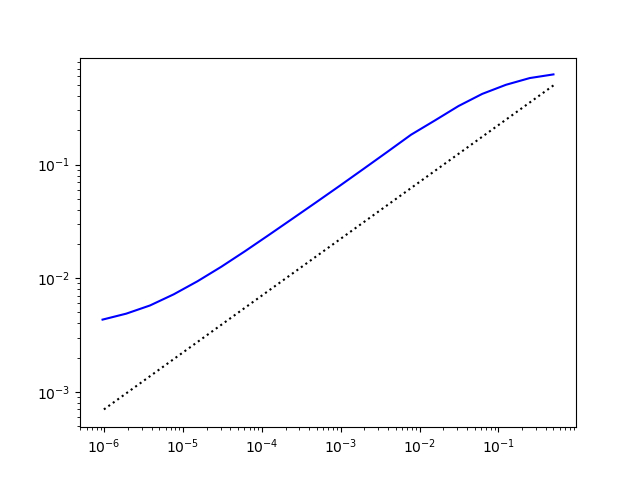}
      &\includegraphics[width=0.4\textwidth]{./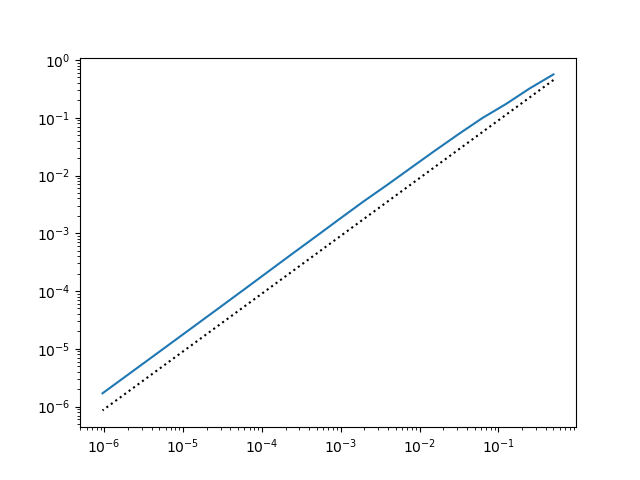}\\
      \includegraphics[width=0.4\textwidth]{./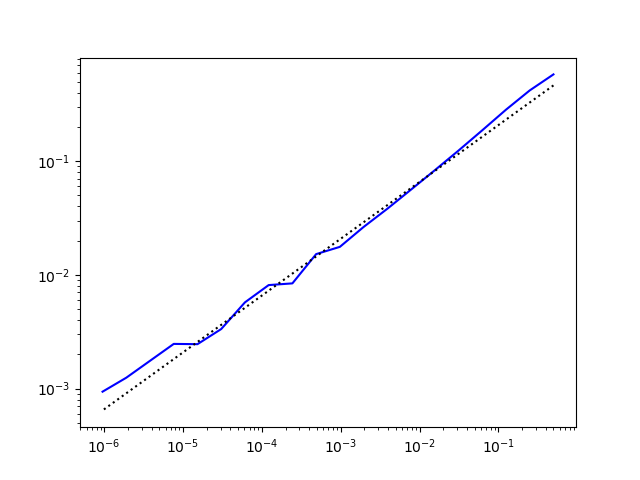}
      &\includegraphics[width=0.4\textwidth]{./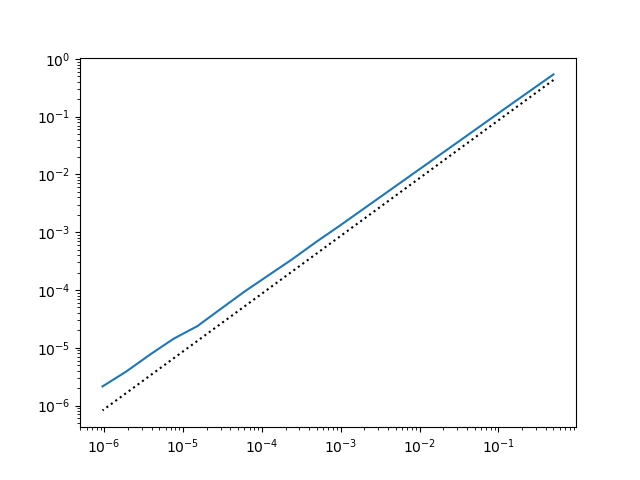}
    \end{aligned}
  \]
  \caption{\label{fi:semiconvergence}
    Convergence of the regularization method.
    \emph{Left:} Plot of the relative error $\lVert u-u_1^\dagger \rVert_2/\lVert u_1^\dagger\rVert_2$ (blue). The black dotted line indicates a rate of order $\sqrt{\delta}$.
    \emph{Right:} Plot of the relative residual $\lVert \mathcal{A}(u)-y_1\rVert_2/\lVert y_1\rVert_2$ (blue). The black dotted line indicates a rate of order $\delta$.
  }
\end{figure}

\subsection*{Comparison with different algorithms}

Finally, we show how Algorithm~\ref{alg:one}, again with $k_{\max} = 5$,
compares with existing algorithms for the solution of problems of the form~\eqref{eq:pdinclusion}.
Specifically, we consider the following two algorithms:
\begin{itemize}
\item The fixed point iteration (FP) for~\eqref{eq:lemma}
  \[
    \begin{aligned}
      u_{n+1} &= \prox_{\alpha g}\bigl(u_n - \alpha\bigl(\mathcal{T}(u_n) - L^* v_n\bigr)\bigr),\\
      v_{n+1} &= \prox_{\beta\alpha^{-1} f^*}\bigl(v_n + \beta\alpha^{-1} Lu_{n+1}\bigr).
    \end{aligned}
  \]
\item The inertial primal-dual forward-backward algorithm (IPDFB, see~\cite[eq.~(21)]{LorenzPock2015})
  \[
    \begin{aligned}
      \hat{u}_n &= u_n + \gamma_n (u_n - u_{n-1}),\\
      \hat{v}_n &= v_n + \gamma_n (v_n - v_{n-1}),\\
      u_{n+1} &= \prox_{\alpha g} \bigl(\hat{u}_n - \alpha\bigl(\mathcal{T}(\hat{u}_n) - L^* \hat{v}_n\bigr),\\
      \tilde{u}_{n+1} &= 2u_{n+1} - \hat{u}_n,\\
      v_{n+1} &= \prox_{\beta\alpha^{-1} f^*}\bigl(\hat{v}_n + \beta\alpha^{-1} L\tilde{u}_{n+1}\bigr).
    \end{aligned}
  \]
\end{itemize}

Compared to FP and IPDFB, each (outer) step of Algorithm~\ref{alg:one} is significantly
more computationally demanding, as it requires $k_{\max}$ evaluations of the the prox-operators for
$g$ and $f^*$. However, for the particular problem we are solving, these operations
can be fully vectorized and thus implemented rather efficiently.
The most expensive operation in each step of every algorithm is the evaluation of
the monotone operator $\mathcal{T}$, that is, the solution of the PDE~\eqref{eq:PDE},
for which the same degree of vectorization is not possible.
As a consequence, each (outer) step of Algorithm~\ref{alg:one} is in practice,
depending on the size of the problem, only twice as expensive as a step of FP or IPDFB.

Figure~\ref{fi:comp_time} shows a plot of the computation time versus accuracy for the
different algorithms and for different discretizations.
Again, we measure the accuracy in terms of the primal and dual residuals $r_1$ and $r_2$
defined in~\eqref{eq:primal_and_dual_residual}.
We stopped the iteration for each of the different algorithms as soon
as the corresponding residual dropped below $10^{-6}$.
We see that the primal residuals $r_1$ drop at a similar speed as those
for IPDFB.
The convergence of the dual residuals $r_2$ is significantly faster for Algorithm 1
compared to the other methods,
likely due to the fact that $k_{\max} = 5$ updates of the dual variable $v$
are performed in each (outer) step of Algorithm~\ref{alg:one}.

\begin{figure}
  \[
    \begin{aligned}
      &\includegraphics[width=0.32\textwidth]{./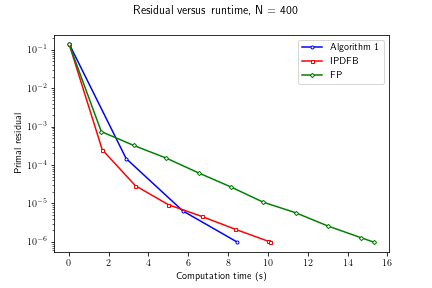}&
      &\includegraphics[width=0.32\textwidth]{./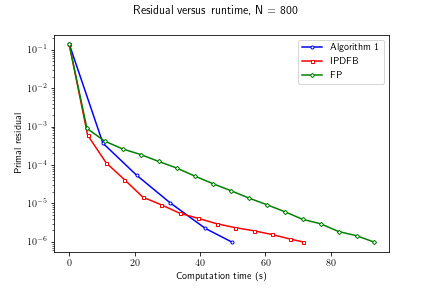}&
      &\includegraphics[width=0.32\textwidth]{./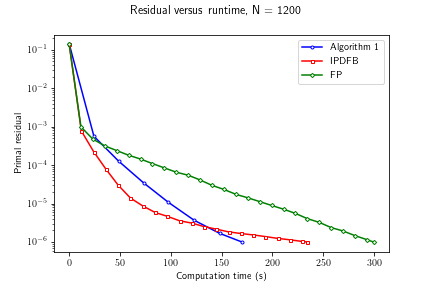}\\
      &\includegraphics[width=0.32\textwidth]{./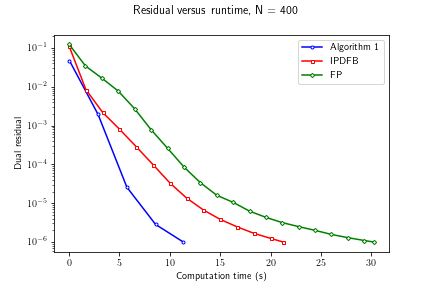}&
      &\includegraphics[width=0.32\textwidth]{./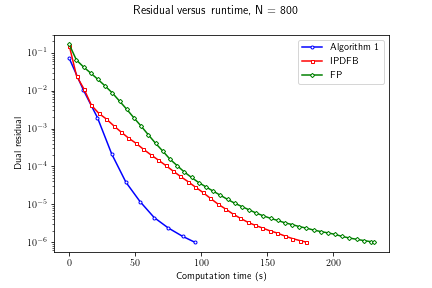}&
      &\includegraphics[width=0.32\textwidth]{./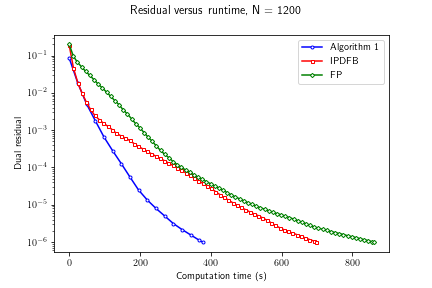}\\
    \end{aligned}
  \]
  \caption{\label{fi:comp_time}
    Comparison of accuracy versus computation times for the different algorithms
    and $N \in \{400,800,1200\}$ discretization points both for the $t$
    and the $x$ variable.
    Each marker on the plot represents 100 steps with the respective algorithm.
    Algorithm 1 is shown in blue in all graphs,
    the fixed point iteration in green,
    and the inertial primal-dual forward-backward algorithm in red.
    \emph{Top row:} Primal residuals for the different algorithms.
    \emph{Bottom row:} Dual residuals for the different algorithms.
  }
\end{figure}

\section*{Acknowledgment} The first author acknowledges the financial support from ERCIM ‘Alain Bensoussan’ Fellowship Programme. 


\begin{thebibliography}{29}
\ifx \bisbn   \undefined \def \bisbn  #1{ISBN #1}\fi
\ifx \binits  \undefined \def \binits#1{#1}\fi
\ifx \bauthor  \undefined \def \bauthor#1{#1}\fi
\ifx \batitle  \undefined \def \batitle#1{#1}\fi
\ifx \bjtitle  \undefined \def \bjtitle#1{#1}\fi
\ifx \bvolume  \undefined \def \bvolume#1{\textbf{#1}}\fi
\ifx \byear  \undefined \def \byear#1{#1}\fi
\ifx \bissue  \undefined \def \bissue#1{#1}\fi
\ifx \bfpage  \undefined \def \bfpage#1{#1}\fi
\ifx \blpage  \undefined \def \blpage #1{#1}\fi
\ifx \burl  \undefined \def \burl#1{\textsf{#1}}\fi
\ifx \doiurl  \undefined \def \doiurl#1{\url{https://doi.org/#1}}\fi
\ifx \betal  \undefined \def \betal{\textit{et al.}}\fi
\ifx \binstitute  \undefined \def \binstitute#1{#1}\fi
\ifx \binstitutionaled  \undefined \def \binstitutionaled#1{#1}\fi
\ifx \bctitle  \undefined \def \bctitle#1{#1}\fi
\ifx \beditor  \undefined \def \beditor#1{#1}\fi
\ifx \bpublisher  \undefined \def \bpublisher#1{#1}\fi
\ifx \bbtitle  \undefined \def \bbtitle#1{#1}\fi
\ifx \bedition  \undefined \def \bedition#1{#1}\fi
\ifx \bseriesno  \undefined \def \bseriesno#1{#1}\fi
\ifx \blocation  \undefined \def \blocation#1{#1}\fi
\ifx \bsertitle  \undefined \def \bsertitle#1{#1}\fi
\ifx \bsnm \undefined \def \bsnm#1{#1}\fi
\ifx \bsuffix \undefined \def \bsuffix#1{#1}\fi
\ifx \bparticle \undefined \def \bparticle#1{#1}\fi
\ifx \barticle \undefined \def \barticle#1{#1}\fi
\bibcommenthead
\ifx \bconfdate \undefined \def \bconfdate #1{#1}\fi
\ifx \botherref \undefined \def \botherref #1{#1}\fi
\ifx \url \undefined \def \url#1{\textsf{#1}}\fi
\ifx \bchapter \undefined \def \bchapter#1{#1}\fi
\ifx \bbook \undefined \def \bbook#1{#1}\fi
\ifx \bcomment \undefined \def \bcomment#1{#1}\fi
\ifx \oauthor \undefined \def \oauthor#1{#1}\fi
\ifx \citeauthoryear \undefined \def \citeauthoryear#1{#1}\fi
\ifx \endbibitem  \undefined \def \endbibitem {}\fi
\ifx \bconflocation  \undefined \def \bconflocation#1{#1}\fi
\ifx \arxivurl  \undefined \def \arxivurl#1{\textsf{#1}}\fi
\csname PreBibitemsHook\endcsname

\bibitem[\protect\citeauthoryear{Engl et~al.}{1996}]{EngHanNeu96}
\begin{bbook}
\bauthor{\bsnm{Engl}, \binits{H.W.}},
\bauthor{\bsnm{Hanke}, \binits{M.}},
\bauthor{\bsnm{Neubauer}, \binits{A.}}:
\bbtitle{Regularization of Inverse Problems}.
\bsertitle{Mathematics and its Applications},
vol. \bseriesno{375},
p. \bfpage{321}.
\bpublisher{Kluwer Academic Publishers Group},
\blocation{Dordrecht}
(\byear{1996})
\end{bbook}
\endbibitem

\bibitem[\protect\citeauthoryear{Scherzer et~al.}{2009}]{SchGraGro09}
\begin{bbook}
\bauthor{\bsnm{Scherzer}, \binits{O.}},
\bauthor{\bsnm{Grasmair}, \binits{M.}},
\bauthor{\bsnm{Grossauer}, \binits{H.}},
\bauthor{\bsnm{Haltmeier}, \binits{M.}},
\bauthor{\bsnm{Lenzen}, \binits{F.}}:
\bbtitle{Variational Methods in Imaging}.
\bsertitle{Applied Mathematical Sciences},
vol. \bseriesno{167},
p. \bfpage{320}.
\bpublisher{Springer},
\blocation{New York}
(\byear{2009}).
\doiurl{10.1007/978-0-387-69277-7}
\end{bbook}
\endbibitem

\bibitem[\protect\citeauthoryear{Kaltenbacher et~al.}{2008}]{KalNeuSch08}
\begin{bbook}
\bauthor{\bsnm{Kaltenbacher}, \binits{B.}},
\bauthor{\bsnm{Neubauer}, \binits{A.}},
\bauthor{\bsnm{Scherzer}, \binits{O.}}:
\bbtitle{Iterative Regularization Methods for Nonlinear Ill-posed Problems}.
\bsertitle{Radon Series on Computational and Applied Mathematics},
vol. \bseriesno{6},
p. \bfpage{194}.
\bpublisher{Walter de Gruyter GmbH \& Co. KG},
\blocation{Berlin}
(\byear{2008}).
\burl{https://doi.org/10.1515/9783110208276}
\end{bbook}
\endbibitem

\bibitem[\protect\citeauthoryear{Kaltenbacher}{2016}]{Kal16}
\begin{barticle}
\bauthor{\bsnm{Kaltenbacher}, \binits{B.}}:
\batitle{Regularization based on all-at-once formulations for inverse
  problems}.
\bjtitle{SIAM J. Numer. Anal.}
\bvolume{54}(\bissue{4}),
\bfpage{2594}--\blpage{2618}
(\byear{2016})
\doiurl{10.1137/16M1060984}
\end{barticle}
\endbibitem

\bibitem[\protect\citeauthoryear{Kunisch and Sachs}{1992}]{KunSac92}
\begin{barticle}
\bauthor{\bsnm{Kunisch}, \binits{K.}},
\bauthor{\bsnm{Sachs}, \binits{E.W.}}:
\batitle{Reduced {SQP} methods for parameter identification problems}.
\bjtitle{SIAM J. Numer. Anal.}
\bvolume{29}(\bissue{6}),
\bfpage{1793}--\blpage{1820}
(\byear{1992})
\doiurl{10.1137/0729100}
\end{barticle}
\endbibitem

\bibitem[\protect\citeauthoryear{Hanke et~al.}{1995}]{HanNeuSch95}
\begin{barticle}
\bauthor{\bsnm{Hanke}, \binits{M.}},
\bauthor{\bsnm{Neubauer}, \binits{A.}},
\bauthor{\bsnm{Scherzer}, \binits{O.}}:
\batitle{A convergence analysis of the {L}andweber iteration for nonlinear
  ill-posed problems}.
\bjtitle{Numer. Math.}
\bvolume{72}(\bissue{1}),
\bfpage{21}--\blpage{37}
(\byear{1995})
\doiurl{10.1007/s002110050158}
\end{barticle}
\endbibitem

\bibitem[\protect\citeauthoryear{Tautenhahn}{2002}]{Tau02}
\begin{barticle}
\bauthor{\bsnm{Tautenhahn}, \binits{U.}}:
\batitle{On the method of {L}avrentiev regularization for nonlinear ill-posed
  problems}.
\bjtitle{Inverse Problems}
\bvolume{18}(\bissue{1}),
\bfpage{191}--\blpage{207}
(\byear{2002})
\doiurl{10.1088/0266-5611/18/1/313}
\end{barticle}
\endbibitem

\bibitem[\protect\citeauthoryear{Alber and Ryazantseva}{2006}]{AlbRya06}
\begin{bbook}
\bauthor{\bsnm{Alber}, \binits{Y.}},
\bauthor{\bsnm{Ryazantseva}, \binits{I.}}:
\bbtitle{Nonlinear Ill-posed Problems of Monotone Type},
p. \bfpage{410}.
\bpublisher{Springer},
\blocation{Dordrecht}
(\byear{2006})
\end{bbook}
\endbibitem

\bibitem[\protect\citeauthoryear{Hofmann et~al.}{2016}]{HofKalRes16}
\begin{barticle}
\bauthor{\bsnm{Hofmann}, \binits{B.}},
\bauthor{\bsnm{Kaltenbacher}, \binits{B.}},
\bauthor{\bsnm{Resmerita}, \binits{E.}}:
\batitle{Lavrentiev's regularization method in {H}ilbert spaces revisited}.
\bjtitle{Inverse Probl. Imaging}
\bvolume{10}(\bissue{3}),
\bfpage{741}--\blpage{764}
(\byear{2016})
\doiurl{10.3934/ipi.2016019}
\end{barticle}
\endbibitem

\bibitem[\protect\citeauthoryear{Grasmair and Hildrum}{2024}]{GraHil20}
\begin{botherref}
\oauthor{\bsnm{Grasmair}, \binits{M.}},
\oauthor{\bsnm{Hildrum}, \binits{F.}}:
Subgradient-based Lavrentiev regularisation of monotone ill-posed problems.
Accepted for publication in Inverse Problems
(2024).
\url{https://arxiv.org/abs/2005.08917}
\end{botherref}
\endbibitem

\bibitem[\protect\citeauthoryear{Bauschke and Combettes}{2011}]{BauCom11}
\begin{bbook}
\bauthor{\bsnm{Bauschke}, \binits{H.H.}},
\bauthor{\bsnm{Combettes}, \binits{P.L.}}:
\bbtitle{Convex Analysis and Monotone Operator Theory in {H}ilbert Spaces}.
\bsertitle{CMS Books in Mathematics},
p. \bfpage{468}.
\bpublisher{Springer},
\blocation{New York}
(\byear{2011}).
\doiurl{10.1007/978-1-4419-9467-7}
\end{bbook}
\endbibitem

\bibitem[\protect\citeauthoryear{Gautam et~al.}{2021}]{Pankaj2021}
\begin{barticle}
\bauthor{\bsnm{Gautam}, \binits{P.}},
\bauthor{\bsnm{Sahu}, \binits{D.R.}},
\bauthor{\bsnm{Dixit}, \binits{A.}},
\bauthor{\bsnm{Som}, \binits{T.}}:
\batitle{Forward-backward-half forward dynamical systems for monotone inclusion
  problems with application to v-{GNE}}.
\bjtitle{J. Optim. Theory Appl.}
\bvolume{190}(\bissue{2}),
\bfpage{491}--\blpage{523}
(\byear{2021})
\doiurl{10.1007/s10957-021-01891-2}
\end{barticle}
\endbibitem

\bibitem[\protect\citeauthoryear{Bo\c{t} et~al.}{2015}]{BotCsetnek2015}
\begin{barticle}
\bauthor{\bsnm{Bo\c{t}}, \binits{R.I.}},
\bauthor{\bsnm{Csetnek}, \binits{E.R.}},
\bauthor{\bsnm{Heinrich}, \binits{A.}},
\bauthor{\bsnm{Hendrich}, \binits{C.}}:
\batitle{On the convergence rate improvement of a primal-dual splitting
  algorithm for solving monotone inclusion problems}.
\bjtitle{Math. Program.}
\bvolume{150}(\bissue{2}),
\bfpage{251}--\blpage{279}
(\byear{2015})
\doiurl{10.1007/s10107-014-0766-0}
\end{barticle}
\endbibitem

\bibitem[\protect\citeauthoryear{V\~{u}}{2013}]{Vu2013}
\begin{barticle}
\bauthor{\bsnm{V\~{u}}, \binits{B.C.}}:
\batitle{A splitting algorithm for dual monotone inclusions involving
  cocoercive operators}.
\bjtitle{Adv. Comput. Math.}
\bvolume{38}(\bissue{3}),
\bfpage{667}--\blpage{681}
(\byear{2013})
\doiurl{10.1007/s10444-011-9254-8}
\end{barticle}
\endbibitem

\bibitem[\protect\citeauthoryear{Lorenz and Pock}{2015}]{LorenzPock2015}
\begin{barticle}
\bauthor{\bsnm{Lorenz}, \binits{D.A.}},
\bauthor{\bsnm{Pock}, \binits{T.}}:
\batitle{An inertial forward-backward algorithm for monotone inclusions}.
\bjtitle{J. Math. Imaging Vision}
\bvolume{51}(\bissue{2}),
\bfpage{311}--\blpage{325}
(\byear{2015})
\doiurl{10.1007/s10851-014-0523-2}
\end{barticle}
\endbibitem

\bibitem[\protect\citeauthoryear{Bo\c{t} and Csetnek}{2016}]{BotCsetnek2016}
\begin{barticle}
\bauthor{\bsnm{Bo\c{t}}, \binits{R.I.}},
\bauthor{\bsnm{Csetnek}, \binits{E.R.}}:
\batitle{An inertial forward-backward-forward primal-dual splitting algorithm
  for solving monotone inclusion problems}.
\bjtitle{Numer. Algorithms}
\bvolume{71}(\bissue{3}),
\bfpage{519}--\blpage{540}
(\byear{2016})
\doiurl{10.1007/s11075-015-0007-5}
\end{barticle}
\endbibitem

\bibitem[\protect\citeauthoryear{Chambolle and Pock}{2016}]{ChambollePock2016}
\begin{barticle}
\bauthor{\bsnm{Chambolle}, \binits{A.}},
\bauthor{\bsnm{Pock}, \binits{T.}}:
\batitle{On the ergodic convergence rates of a first-order primal-dual
  algorithm}.
\bjtitle{Math. Program.}
\bvolume{159}(\bissue{1-2}),
\bfpage{253}--\blpage{287}
(\byear{2016})
\doiurl{10.1007/s10107-015-0957-3}
\end{barticle}
\endbibitem

\bibitem[\protect\citeauthoryear{Poljak}{1964}]{Polyak1964}
\begin{barticle}
\bauthor{\bsnm{Poljak}, \binits{B.T.}}:
\batitle{Some methods of speeding up the convergence of iterative methods}.
\bjtitle{\v{Z}. Vy\v{c}isl. Mat i Mat. Fiz.}
\bvolume{4},
\bfpage{791}--\blpage{803}
(\byear{1964})
\end{barticle}
\endbibitem

\bibitem[\protect\citeauthoryear{Nesterov}{1983}]{Nesterov1983}
\begin{barticle}
\bauthor{\bsnm{Nesterov}, \binits{Y.E.}}:
\batitle{A method for solving the convex programming problem with convergence
  rate {$O(1/k\sp{2})$}}.
\bjtitle{Dokl. Akad. Nauk SSSR}
\bvolume{269}(\bissue{3}),
\bfpage{543}--\blpage{547}
(\byear{1983})
\end{barticle}
\endbibitem

\bibitem[\protect\citeauthoryear{Beck and Teboulle}{2009}]{BeckMarc2009}
\begin{barticle}
\bauthor{\bsnm{Beck}, \binits{A.}},
\bauthor{\bsnm{Teboulle}, \binits{M.}}:
\batitle{A fast iterative shrinkage-thresholding algorithm for linear inverse
  problems}.
\bjtitle{SIAM J. Imaging Sci.}
\bvolume{2}(\bissue{1}),
\bfpage{183}--\blpage{202}
(\byear{2009})
\doiurl{10.1137/080716542}
\end{barticle}
\endbibitem

\bibitem[\protect\citeauthoryear{Heida et~al.}{2019}]{HeiPetRen19}
\begin{barticle}
\bauthor{\bsnm{Heida}, \binits{M.}},
\bauthor{\bsnm{Patterson}, \binits{R.I.A.}},
\bauthor{\bsnm{Renger}, \binits{D.R.M.}}:
\batitle{Topologies and measures on the space of functions of bounded variation
  taking values in a {B}anach or metric space}.
\bjtitle{J. Evol. Equ.}
\bvolume{19}(\bissue{1}),
\bfpage{111}--\blpage{152}
(\byear{2019})
\doiurl{10.1007/s00028-018-0471-1}
\end{barticle}
\endbibitem

\bibitem[\protect\citeauthoryear{Lions and Mercier}{1979}]{LionsMercier1979}
\begin{barticle}
\bauthor{\bsnm{Lions}, \binits{P.-L.}},
\bauthor{\bsnm{Mercier}, \binits{B.}}:
\batitle{Splitting algorithms for the sum of two nonlinear operators}.
\bjtitle{SIAM J. Numer. Anal.}
\bvolume{16}(\bissue{6}),
\bfpage{964}--\blpage{979}
(\byear{1979})
\doiurl{10.1137/0716071}
\end{barticle}
\endbibitem

\bibitem[\protect\citeauthoryear{Abbas and Attouch}{2015}]{AbbasAttouch2015}
\begin{barticle}
\bauthor{\bsnm{Abbas}, \binits{B.}},
\bauthor{\bsnm{Attouch}, \binits{H.}}:
\batitle{Dynamical systems and forward-backward algorithms associated with the
  sum of a convex subdifferential and a monotone cocoercive operator}.
\bjtitle{Optimization}
\bvolume{64}(\bissue{10}),
\bfpage{2223}--\blpage{2252}
(\byear{2015})
\doiurl{10.1080/02331934.2014.971412}
\end{barticle}
\endbibitem

\bibitem[\protect\citeauthoryear{Chen and Loris}{2019}]{Chen2019}
\begin{barticle}
\bauthor{\bsnm{Chen}, \binits{J.}},
\bauthor{\bsnm{Loris}, \binits{I.}}:
\batitle{On starting and stopping criteria for nested primal-dual iterations}.
\bjtitle{Numer. Algorithms}
\bvolume{82}(\bissue{2}),
\bfpage{605}--\blpage{621}
(\byear{2019})
\doiurl{10.1007/s11075-018-0616-x}
\end{barticle}
\endbibitem

\bibitem[\protect\citeauthoryear{Bonettini et~al.}{2023}]{Bonet2023}
\begin{barticle}
\bauthor{\bsnm{Bonettini}, \binits{S.}},
\bauthor{\bsnm{Prato}, \binits{M.}},
\bauthor{\bsnm{Rebegoldi}, \binits{S.}}:
\batitle{A nested primal-dual {FISTA}-like scheme for composite convex
  optimization problems}.
\bjtitle{Comput. Optim. Appl.}
\bvolume{84}(\bissue{1}),
\bfpage{85}--\blpage{123}
(\byear{2023})
\doiurl{10.1007/s10589-022-00410-x}
\end{barticle}
\endbibitem

\bibitem[\protect\citeauthoryear{Attouch and
  Peypouquet}{2016}]{AttouchJuan2016}
\begin{barticle}
\bauthor{\bsnm{Attouch}, \binits{H.}},
\bauthor{\bsnm{Peypouquet}, \binits{J.}}:
\batitle{The rate of convergence of {N}esterov's accelerated forward-backward
  method is actually faster than {$1/k^2$}}.
\bjtitle{SIAM J. Optim.}
\bvolume{26}(\bissue{3}),
\bfpage{1824}--\blpage{1834}
(\byear{2016})
\doiurl{10.1137/15M1046095}
\end{barticle}
\endbibitem

\bibitem[\protect\citeauthoryear{Moreau}{1965}]{Moreau1965}
\begin{barticle}
\bauthor{\bsnm{Moreau}, \binits{J.-J.}}:
\batitle{Proximit\'{e} et dualit\'{e} dans un espace {H}ilbertien}.
\bjtitle{Bull. Soc. Math. France}
\bvolume{93},
\bfpage{273}--\blpage{299}
(\byear{1965})
\end{barticle}
\endbibitem

\bibitem[\protect\citeauthoryear{Barbu}{1976}]{Bar76}
\begin{bbook}
\bauthor{\bsnm{Barbu}, \binits{V.}}:
\bbtitle{Nonlinear Semigroups and Differential Equations in {B}anach Spaces},
p. \bfpage{352}.
\bpublisher{Editura Academiei Republicii Socialiste Rom\^{a}nia, Bucharest},
\blocation{Noordhoff International Publishing, Leiden}
(\byear{1976}).
\bcomment{Translated from the Romanian}
\end{bbook}
\endbibitem

\bibitem[\protect\citeauthoryear{Plato}{2017}]{Pla17}
\begin{barticle}
\bauthor{\bsnm{Plato}, \binits{R.}}:
\batitle{Converse results, saturation and quasi-optimality for {L}avrentiev
  regularization of accretive problems}.
\bjtitle{SIAM J. Numer. Anal.}
\bvolume{55}(\bissue{3}),
\bfpage{1315}--\blpage{1329}
(\byear{2017})
\doiurl{10.1137/16M1089125}
\end{barticle}
\endbibitem

\end{thebibliography}


\end{document}